\newcommand{\Z}{\mathbb{Z}}
\newcommand{\D}{\mathbb{D}}
\newcommand{\N}{\mathbb{N}} 
\newcommand{\R}{\mathbb{R}} 
\newcommand{\T}{\mathbb{T}} 
\newcommand{\C}{\mathbb{C}}
\newcommand{\1}{\mathbbm{1}}
\newcommand{\CC}{\mathcal{C}}
\newcommand{\DC}{\mathcal{D}}
\newcommand{\FC}{\mathcal{F}}
\newcommand{\HC}{\mathcal{H}}
\newcommand{\IC}{\mathcal{I}}
\newcommand{\LC}{\mathcal{L}}
\newcommand{\PC}{\mathcal{P}}
\newcommand{\SC}{\mathcal{S}}
\newcommand{\WC}{\mathcal{W}}
\newcommand{\rd}{\textnormal{rd}}
\newcommand{\rk}{\textnormal{rk}}
\newcommand{\dist}{\textnormal{dist}}
\newtheorem{theorem}{Theorem}[section]
\newtheorem{lemma}[theorem]{Lemma}
\newtheorem{proposition}[theorem]{Proposition}
\newtheorem{corollary}[theorem]{Corollary}
\newtheorem{remark}[theorem]{Remark}
\title{Two more counterexamples to the infinite dimensional Carleson embedding theorem}
\author{Eskil Rydhe}
\begin{document}

\maketitle

\begin{abstract}
	The existence of a counterexample to the infinite-dimensional Carleson embedding theorem has been established by Nazarov, Pisier, Treil, and Volberg. We provide an explicit construction of such an example. We also obtain a non-constructive example of particularly simple form; the density function of the measure (with respect to a certain weighted area measure) is the tensor-square of a Hilbert space-valued analytic function. This special structure of the measure has implications for Hankel-like operators appearing in control theory.
\end{abstract}

\section{Introduction}
Let $\HC$ denote a separable Hilbert space with norm $\|\cdot\|_\HC$ and inner product $\langle\cdot,\cdot\rangle_\HC$. We use $N\in[1,\infty]$ to denote the dimension of $\HC$, and $\LC_+(\HC)$ the set of positive (bounded) linear operators on $\HC$. We let $L^2(\T,\HC)$ denote the standard space of $2$-Lebesgue--Bochner integrable $\HC$-valued functions defined on the unit circle $\T$, and $H^2(\T,\HC)$ the subspace of analytic functions in $L^2(\T,\HC)$. 

Throughout this paper, we let $\mu$ be an $\LC_+(\HC)$-valued measure on the open unit disc $\D$. By $L^2(\D,\HC,d\mu)$ we denote the space of strongly measurable functions $f:\D\to\HC$ such that
\[
\int_{\D}\langle d\mu\, f,f\rangle_\HC <\infty.
\]

Given an arc $I\subset \T$, the corresponding Carleson square is the set $Q_I=\{w\in\D  ;\frac{w}{|w|}\in I,1-|I|<|w|<1\}$. Here $|I|$ denotes the normalized Lebesgue measure of $I$, i.e. $|\T|=1$. The Carleson intensity of $\mu$ is defined as
\[
\|\mu \|_\IC=\sup_{I\subset \T,\|e\|_\HC=1} \langle \mu(Q_I)e,e\rangle_\HC.
\]
Note that in order to obtain essentially the same quantity, it suffices to consider dyadic arcs.

We define the harmonic extension operator for integrable functions $f:\T\to\HC$ by
\[
\PC f(re^{2\pi i x})=\int_0^1f(e^{2\pi it}) P_r(x-t)\, dt,\quad w=re^{2\pi i x}\in \D,
\]
where
\[
P_r(t)=\frac{1-r^2}{1-2r\cos(2\pi t)+r^2},\quad t\in\R,
\]
is the usual Poisson kernel for $\D$. In the sequel, we shall typically write $f(w)$ as shorthand for $\PC f(w)$. 

The top tile of $Q_I$ is the set $T_I=\{w\in Q_I;|w|<1-\frac{|I|}{2}\}$. We define the dyadic extension operator by
\[
\PC^d f(w)=\sum_{I\in\DC} \1_{T_I}(w)\frac{1}{|I|}\int_I f\, dm,\quad w\in \D,
\]
where $\DC$ denotes the collection of dyadic arcs in $\T$, and $\1_{T_I}$ is the characteristic function of $T_I$.

Given $\mu$, we refer to $\PC: H^2(\HC)\to L^2(\D,\HC,d\mu)$ as a (harmonic) Carleson embedding. It is of interest to characterize the class of measures $\mu$ for which such embeddings are bounded. In the scalar-valued case, i.e. $N=1$, such measures are characterized by having finite Carleson intensity. Moreover, the corresponding norms are comparable. This characterization scales badly with the dimension of $\HC$. A first result in this direction is due to Nazarov, Treil, and Volberg \cite{Nazarov-Treil-Volberg1997:CounterExInfDimCarlesonEmbThm}:
\begin{proposition}\label{Proposition:NTV}
	There exists a universal constant $c>0$ with the following property: If $\HC$ is a Hilbert space of dimension $N<\infty$, then there exists an $\LC_+(\HC)$-valued measure $\mu$ on $\D$, such that
	\begin{equation*}
		\frac{\|\PC^d\|_{L^2(\T,\HC ) \to L^2(\D,\HC,d\mu)}}{\|\mu\|_{\IC}}\ge c (\log N)^{1/2}.
	\end{equation*}
\end{proposition}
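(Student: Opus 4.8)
The plan is to reduce Proposition~\ref{Proposition:NTV} to a statement about the dyadic tree and then exhibit an explicit measure. Since $\PC^d f$ is constant, equal to the average $\langle f\rangle_I:=\tfrac1{|I|}\int_I f\,dm$, on each top tile $T_I$, it suffices to consider measures $\mu=\sum_{I\in\DC}A_I\,\delta_{w_I}$ with $w_I\in T_I$ and $A_I\in\LC_+(\HC)$ (a finite sum in our construction). For such $\mu$,
\[
\|\PC^d f\|^2_{L^2(\D,\HC,d\mu)}=\sum_{I\in\DC}\langle A_I\langle f\rangle_I,\langle f\rangle_I\rangle_\HC,\qquad
\mu(Q_I)=\sum_{\substack{J\in\DC\\ J\subseteq I}}A_J,
\]
so, using the remark that dyadic squares suffice, $\|\mu\|_\IC\asymp\sup_{I\in\DC}\bigl\|\sum_{J\subseteq I}A_J\bigr\|$. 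Thus the task is to produce operators $(A_I)$ and a function $f$ making this supremum small while the left-hand sum is large; note $f$ need not be analytic, as the proposition concerns $\PC^d$ on all of $L^2(\T,\HC)$.

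I would let $N$ play the role of the depth of the tree. Fix an orthonormal basis $(e_k)_{k=1}^N$ of $\HC$, let $r_1,\dots,r_N$ be the first $N$ Rademacher functions on $\T$ (so $r_k$ is $\pm1$-valued, constant on each dyadic arc of generation $k$, i.e.\ of length $2^{-k}$, and has mean zero on arcs of length $>2^{-k}$), and put $f:=\sum_{k=1}^N r_k e_k$, so that $\|f\|^2_{L^2(\T,\HC)}=N$. Computing averages, one finds $\langle f\rangle_I=u_I:=\sum_{k=1}^j r_k(I)\,e_k$ for a dyadic arc $I$ of generation $j\in\{1,\dots,N\}$, where $r_k(I)$ is the constant value of $r_k$ on $I$; in particular $\|u_I\|_\HC^2=j$. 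Writing $u\otimes v$ for the rank-one operator $x\mapsto\langle x,v\rangle_\HC u$, I define $\mu$ by $A_I:=\tfrac{|I|}{j}\,u_I\otimes u_I$ --- that is, $|I|$ times the orthogonal projection onto $\C u_I$ --- when $1\le\operatorname{gen}(I)=j\le N$, and $A_I:=0$ otherwise. Each $A_I$ is thus aligned with $\langle f\rangle_I$, so the single martingale-type function $f$ resonates with the measure at every scale at once.

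Given this, the embedding bound is immediate: $\langle A_I\langle f\rangle_I,\langle f\rangle_I\rangle_\HC=\tfrac{|I|}{j}\|u_I\|_\HC^4=j\,|I|$, so
\[
\|\PC^d f\|^2_{L^2(\D,\HC,d\mu)}=\sum_{j=1}^N j\sum_{\operatorname{gen}(I)=j}|I|=\sum_{j=1}^N j\asymp N^2,
\]
whence $\|\PC^d\|\ge\|\PC^d f\|/\|f\|\gtrsim\sqrt N$. The substantive step is the Carleson estimate. Fix a dyadic arc $I$ of generation $i$ and sum $A_J$ over its $2^{j-i}$ descendants $J$ of generation $j\ge i$: writing $u_J=u_I+w_J$ with $w_J\in\operatorname{span}\{e_{i+1},\dots,e_j\}$, the cross terms vanish because each of the ``new'' signs $r_{i+1},\dots,r_j$ takes both values equally often over these descendants, so $\sum_{J\subseteq I,\ \operatorname{gen}(J)=j}u_J\otimes u_J=2^{j-i}\bigl(u_I\otimes u_I+\textstyle\sum_{m=i+1}^j e_m\otimes e_m\bigr)$. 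Summing the geometric series in $j$ gives $\bigl\|\sum_{J\subseteq I}A_J\bigr\|=2^{-i}i\sum_{j=i}^N\tfrac1j$ for $i\ge1$, and $\bigl\|\sum_{J\subseteq\T}A_J\bigr\|=\sum_{j=1}^N\tfrac1j$ for the whole circle; hence $\|\mu\|_\IC\asymp\sum_{j=1}^N\tfrac1j\asymp\log N$. Therefore $\|\PC^d\|/\|\mu\|_\IC\gtrsim\sqrt N/\log N$, which exceeds $c(\log N)^{1/2}$ for a universal $c>0$ (the case $N=1$ being trivial).

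I expect the main obstacle to be locating this ansatz. The coupling ``$A_I$ aligned with $\langle f\rangle_I$'' together with the choice of a single Rademacher-type $f$ feeding all scales is exactly what makes both estimates work, and it is not forced by the reduction; once it is in hand, the embedding bound is a one-line computation and the Carleson bound reduces to the cross-term cancellation above, the only delicate point being that the projections onto $\operatorname{span}\{e_{i+1},\dots,e_j\}$ accumulate only logarithmically, rather than polynomially, in the depth --- this is what keeps the Carleson intensity at $\log N$. (A variant using random rather than aligned vectors can lower the Carleson intensity to $O(1)$ at the cost of enlarging the ambient dimension; this is closer in flavour to the known non-constructive proof but gives the same bound.)
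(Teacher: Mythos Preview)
Your argument has a fatal gap in the Carleson-intensity step. You write $\|\mu\|_\IC\asymp\sup_{I}\bigl\|\sum_{J\subseteq I}A_J\bigr\|$, but the Carleson intensity must carry the normalisation $1/|I|$: the displayed definition in the paper omits it, yet every use of $\|\cdot\|_\IC$ there (Section~\ref{Subsec:HarmIntensity} bounds $\langle\mu(Q_K)e,e\rangle$ by $|K|$; Corollary~\ref{Corollary:Explicit} normalises $\|\mu_N\|_\IC=1$; the scalar Carleson theorem itself) forces $\|\mu\|_\IC=\sup_{I,\|e\|=1}\tfrac1{|I|}\langle\mu(Q_I)e,e\rangle$. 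Without the $1/|I|$ the ratio in the proposition is not scale-invariant and the statement becomes vacuous. With the normalisation reinstated, your own formula gives
\[
\frac{1}{|I|}\Bigl\|\sum_{J\subseteq I}A_J\Bigr\|=i\sum_{j=i}^N\frac1j\qquad(i=\operatorname{gen}(I)\ge1),
\]
and at $i=\lfloor N/2\rfloor$ this is $\asymp N$, not $\log N$. Concretely, test with $e=u_I/\sqrt i$: since $\langle u_J,u_I\rangle=i$ for every descendant $J\subseteq I$, the contributions pile up without cancellation. On the other side, $\langle A_Iv,v\rangle\le|I|\,\|v\|^2$ together with Jensen gives $\|\PC^df\|^2\le N\|f\|^2$, so in fact $\|\PC^d\|^2\asymp N\asymp\|\mu\|_\IC$ and your measure is \emph{not} a counterexample at all.

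The missing idea is that the vectors attached to the intervals must stay uniformly bounded while still spreading across many coordinates. Your $u_I$ satisfy $\|u_I\|^2=\operatorname{gen}(I)$, and it is exactly this growth that blows up the intensity at mid-depth. The Nazarov--Treil--Volberg construction (mirrored in Section~\ref{Subsec:Harmonic}) replaces the unit Rademacher signs by decaying weights: $\omega_I=\sum_{l<j}a_{j-l}\,e^{2\pi i2^lC_I}e_l$ with $a_m=\tfrac1{m\sqrt{\log N}}$, so that $\|\omega_I\|\lesssim1$ uniformly. The complex phases then furnish the cancellation needed for the Carleson bound (this is Lemma~\ref{Lemma:DiagonalTerms}), while the harmonic tail $\sum_m a_m\asymp\sqrt{\log N}$ is what drives the lower bound on the embedding when tested against the lacunary $E(w)=\sum_l w^{2^l}e_l$. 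Your Rademacher test function is close in spirit to $E$, but the measure needs the weights $a_m$, not unit coefficients.
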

Proposition \ref{Proposition:NTV} was proved using a rather sophisticated, yet explicit, construction. A corollary to this result is that if $\HC$ is infinite-dimensional, then there exists an $\LC_+(\HC)$-valued measure on $\D$, such that $\|\mu\|_\IC<\infty$, while $\PC^d:L^2(\T,\HC ) \to L^2(\D,\HC,d\mu)$ is unbounded. It has later been observed by Pott and Sadosky \cite{Pott-Sadosky2002:BMOBi-discOpBMO} that this result may be deduced from a geometric construction due to Carleson \cite{Carleson1974:CounterExMeasuresBddOnHpBi-disc}. 

A corresponding result for harmonic embeddings, along with a sharp estimate of the dimensional growth, was obtained by Nazarov, Pisier, Treil, and Volberg \cite{Nazarov-Pisier-Treil-Volberg2002:EstsVecCarlesonEmbThmVecParaprods}:
\begin{proposition}\label{Proposition:NPTV}
	There exists a universal constant $c>0$ with the following property: If $\HC$ is a Hilbert space of dimension $N<\infty$, then there exists an $\LC_+(\HC)$-valued measure on $\D$, such that
	\begin{equation*}
		\frac{\|\PC\|_{H^2(\T,\HC ) \to L^2(\D,\HC,d\mu)}}{\|\mu\|_{\IC}}\ge c \log N.
	\end{equation*}
\end{proposition}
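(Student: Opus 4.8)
The plan is to produce, for every finite $N$, a single $\LC_+(\HC)$-valued measure witnessing the stated ratio. Since both sides are homogeneous in $\mu$, I may normalize $\|\mu\|_\IC=1$, so it suffices to construct $\mu$ together with one analytic test function $f\in H^2(\T,\HC)$ with $\|f\|_{H^2}=1$ and $\|\PC f\|_{L^2(\D,\HC,d\mu)}\gtrsim\log N$. I would build $\mu$ with an explicit multi-scale (lacunary) structure: fix a large absolute constant $K$, set $M\asymp\log N$, and for $\ell=1,\dots,M$ let the $\ell$-th generation $\GC_\ell$ consist of the Carleson boxes $Q_I$ with $|I|=2^{-\ell K}$. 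The measure is supported on the corresponding top tiles, and on $T_I$ it is $m_I$ times the rank-one projection $P_{e_I}$ onto a unit vector $e_I\in\HC$; the scalars $m_I$ and the directions $e_I$ are the parameters to be chosen.

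Two requirements pull against each other, and keeping them in balance is the whole point. For the Carleson intensity one needs $\bigl\|\sum_{Q_I\subseteq Q_J}m_I P_{e_I}\bigr\|_{\LC(\HC)}\le 1$ for every dyadic arc $J$ (by the reduction to dyadic arcs recorded in the introduction), and since a box $Q_J$ of generation $j$ receives contributions from all finer generations $\ell\ge j$, a careless choice makes this norm grow like $M=\log N$. The remedy, in the spirit of Proposition~\ref{Proposition:NTV}, is to select the $e_I$ by a random-rotation argument so that the directions used at different generations inside a fixed box are essentially incoherent: because $M=\log N$ is far smaller than $N=\dim\HC$, a sum of $\lesssim\log N$ rank-one projections onto suitably randomized unit vectors has operator norm $\lesssim 1$, so the generations collectively still look like a single projection to every Carleson square. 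In the opposite direction, the harmonic (hence, for analytic $f$, the analytic) extension is genuinely non-local: for a point $w$ near the center of a Carleson box the Poisson average over an arc is comparable for all of its ancestors, so $\PC f$ restricted to the top tiles of generation $\ell$ reflects a whole band of Fourier modes of $f$ rather than a single scale. One then chooses $f$ so that its unit of energy is \emph{re-registered}, up to the Nazarov--Treil--Volberg loss of order $\log N$, at each of the $M\asymp\log N$ generations, whence
\[
\|\PC f\|_{L^2(\D,\HC,d\mu)}^2 \;\gtrsim\; \sum_{\ell=1}^{M}\bigl(\text{per-generation contribution of order }\log N\bigr)\;\asymp\;(\log N)^2 .
\]

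The main obstacle is exactly the simultaneous realization of these two features: the incoherence forced on $\{e_I\}$ by the Carleson bound must not destroy the coherence needed for a single analytic $f$ to be near-extremal for every generation at once, and, most delicately, one must extract the full factor $\log N$ rather than the $(\log N)^{1/2}$ that a naive stacking of per-generation examples produces — that extra power is precisely the contribution of the cross-scale coupling built into the Poisson kernel, which the dyadic operator $\PC^d$ does not see. Subsidiary technical points are (i) running the randomized Nazarov--Treil--Volberg construction inside each generation while retaining enough independence between generations that $\bigl\|\sum_\ell m_I P_{e_I}\bigr\|$ over a fixed box is $O(1)$; and (ii) verifying the intensity bound over all arcs, not only the well-placed dyadic ones, controlling the interaction of a box of one generation with the fractured pieces of finer generations overlapping it. Once the per-generation estimate and the Carleson bound are in hand, the lower bound for $\|\PC f\|_{L^2(\D,\HC,d\mu)}$ and the passage back to the stated inequality are routine. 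I would also note that one cannot obtain the theorem merely by transferring Proposition~\ref{Proposition:NTV} through the usual comparison of $\PC$ with $\PC^d$ over randomly shifted dyadic grids: that route only reproduces the factor $(\log N)^{1/2}$, and recovering the full $\log N$ is the substantive content here.
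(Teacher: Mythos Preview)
The paper does not itself prove Proposition~\ref{Proposition:NPTV}; it is quoted from \cite{Nazarov-Pisier-Treil-Volberg2002:EstsVecCarlesonEmbThmVecParaprods}, and the paper explicitly notes that the argument there ``yields the existence of such measures, but not an explicit representation.'' So there is no proof in the present manuscript against which to compare your attempt, and the paper's own constructive Theorem~\ref{Theorem:Main} achieves only $(\log N)^{1/2}$.

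Judged on its own terms, your proposal is a research plan rather than a proof, and the gap lies exactly where you yourself flag it. You need a single analytic $f$ with $\|f\|_{H^2}=1$ to produce a contribution of order $\log N$ to $\|\PC f\|^2_{L^2(d\mu)}$ at \emph{each} of the $M\asymp\log N$ generations. But you have also required the directions $e_I$ across generations to be incoherent enough that the sum of the corresponding rank-one projections over any Carleson box has operator norm $O(1)$. These two requirements are in direct tension: if the $e_I$ are genuinely spread over $\asymp\log N$ nearly orthogonal directions, a unit-norm $f$ can align with at most $O(1)$ of them per scale, and the naive count yields $\sum_{\ell=1}^{M} O(1)=O(\log N)$ for $\|\PC f\|^2$, i.e.\ the ratio $(\log N)^{1/2}$ you are trying to exceed. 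Your assertion that ``cross-scale coupling built into the Poisson kernel'' upgrades each generation's contribution from $O(1)$ to $\log N$ is the entire content of the proposition, and nothing in your sketch supplies it. (Note also that ``running the Nazarov--Treil--Volberg construction inside each generation'' does not parse: the NTV example is intrinsically multi-scale, using $\sim N$ dyadic levels, so it cannot live within a single generation $\GC_\ell$ of fixed arclength $2^{-\ell K}$.)

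For orientation, the actual NPTV proof does not proceed by constructing an explicit $\mu$ of the kind you describe; it goes through the equivalence with operator-valued paraproducts and operator-space methods, which is precisely why the resulting measure is non-explicit. Obtaining the sharp $\log N$ by an explicit construction along the lines you sketch --- if possible at all --- would be a new result, not a reproduction of the cited one.
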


The methods used in \cite{Nazarov-Pisier-Treil-Volberg2002:EstsVecCarlesonEmbThmVecParaprods} yields the existence of such measures, but not an explicit representation. The goal of this note is to adapt the explicit construction from \cite{Nazarov-Treil-Volberg1997:CounterExInfDimCarlesonEmbThm} to the setting of harmonic embeddings:	

\begin{theorem}\label{Theorem:Main}
	There exists a universal constant $c>0$ with the following property: If $\HC $ is a Hilbert space of dimension $N<\infty$, then there exists an $\LC_+(\HC)$-valued measure $\mu$ on $\D$, such that
	\begin{equation*}
		\frac{\|\PC\|_{H^2(\T,\HC ) \to L^2(\D,\HC,d\mu)}}{\|\mu\|_{\IC}}\ge c (\log N)^{1/2}.
	\end{equation*}
	The measure $\mu$ may be explicitly constructed in such a way that
	\[
	d\mu(w)=\phi(w)\otimes \phi(w)(1-|w|^2)\, dA(w),
	\]
	where $\phi:\D\to\HC$ is analytic.
\end{theorem}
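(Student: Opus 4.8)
The plan is to realise a Nazarov--Treil--Volberg type obstruction by a single absolutely continuous measure whose operator density is \emph{pointwise} rank one; this produces the explicitness and the tensor-square form at one stroke. With $d\mu(w)=\phi(w)\otimes\phi(w)(1-|w|^2)\,dA(w)$ one has, for every $f\in H^2(\T,\HC)$ and every unit vector $e$,
\begin{gather*}
\int_\D\langle d\mu\,f,f\rangle_\HC=\int_\D\big|\langle f(w),\phi(w)\rangle_\HC\big|^2(1-|w|^2)\,dA(w),\\
\langle\mu(Q_I)e,e\rangle_\HC=\int_{Q_I}\big|\langle\phi(w),e\rangle_\HC\big|^2(1-|w|^2)\,dA(w),
\end{gather*}
so the entire problem is rephrased through the scalar functions $w\mapsto\langle f(w),\phi(w)\rangle_\HC$. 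It therefore suffices to produce, for each $N$, a Hilbert space $\HC$ of dimension $N$ and an analytic $\phi:\D\to\HC$ such that $\|\mu\|_\IC$ stays bounded while $\|\PC\|^2_{H^2(\T,\HC)\to L^2(\D,\HC,d\mu)}\gtrsim\log N$; this is precisely the asserted ratio bound, after taking square roots.

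I would first import the combinatorial heart of \cite{Nazarov-Treil-Volberg1997:CounterExInfDimCarlesonEmbThm}: for a parameter $n$, a Hilbert space of dimension $\asymp 2^n$ (so $\log N\asymp n$) and, attached to each dyadic arc $I$ of generation at most $n$, a unit vector $v_I\in\HC$ and a weight $\alpha_I\asymp|I|$, with the $v_I$ arranged so that those of a common generation are orthonormal while along each dyadic chain they overlap in the controlled way that makes the induced dyadic operator weight $\sum_I\alpha_I\,v_I\otimes v_I$ on the top tiles $T_I$ have Carleson intensity $O(1)$ but dyadic embedding norm $\gtrsim\sqrt{n}$. I would then pass to the analytic side by replacing each top tile with an analytic block: set $\phi(w)=\sum_I c_I\,b_I(w)\,v_I$, where $b_I$ is the normalised reproducing kernel of $L^2(\D,(1-|w|^2)\,dA)$ based near the centre of $T_I$ --- so that $\int_\D|b_I|^2(1-|w|^2)\,dA=1$, $|b_I|^2(1-|w|^2)$ concentrates on $T_I$, and $|b_Ib_{I'}|$ exhibits cubic pseudohyperbolic decay between the two centres --- and $c_I\asymp|I|^{1/2}$, so that $d\mu$ reproduces the NTV weight on each $T_I$ up to constants. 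Because $|c_I|$ decays geometrically with $|I|$ the series converges locally uniformly, so $\phi$ is analytic, and it is as explicit as the NTV data; in the self-similar case one may instead take $b_I(w)=w^{n_I}$ with $2^{-n_I}\asymp|I|$.

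It then remains to transfer the two estimates. For the Carleson bound, one expands $\big|\langle\phi(w),e\rangle_\HC\big|^2=\big|\sum_I c_Ib_I(w)\langle v_I,e\rangle_\HC\big|^2$ inside an arbitrary Carleson box $Q_J$: the diagonal sum $\sum_I|c_I|^2|b_I(w)|^2|\langle v_I,e\rangle_\HC|^2$ is controlled via the NTV Carleson property together with the fact that $\int_{Q_J}|b_I|^2(1-|w|^2)\,dA$ behaves like the portion of $T_I$ lying in $Q_J$, with tails decaying like $(|J|/|I|)^3$ when $J\subseteq I$, while the off-diagonal sum is absorbed into the diagonal using the cubic kernel decay and the orthogonality of the $v_I$ within generations. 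The same expansion shows that the dyadicisation of $\mu$ agrees with the NTV weight up to constants, so the lower bound follows either from the standard comparability of the harmonic and dyadic Carleson embedding norms, or directly by carrying the NTV extremiser over as an analytic test function $f(w)=\sum_I d_I\,\beta_I(w)\,v_I$ whose $H^2$ norm is kept bounded by near-orthogonality within generations and geometric decay across them, while $\langle f(w),\phi(w)\rangle_\HC$ resonates over all $\asymp n$ scales along a chain.

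The step I expect to be the real obstacle is the cross-scale (off-diagonal) analysis, which is needed both for the Carleson upper bound and for identifying the dyadicised measure: the analytic blocks $b_I$ are not compactly supported, so their tails interact at \emph{every} pair of scales, and one must show that the cubic decay of $b_Ib_{I'}$, combined with the tree geometry of the vectors $v_I$, keeps the resulting double sums bounded uniformly in $n$ --- a balancing act with no counterpart in the purely dyadic argument. A secondary point, and the reason the tensor-square form deserves a separate statement, is rigidity: here the operator density is rank one at \emph{every} point $w$, which is stronger than being rank one on each tile, so one must check that the NTV combinatorial data can be realised in this pointwise form without degrading either estimate; this is also what ties the construction to the Hankel-type operators mentioned in the introduction.
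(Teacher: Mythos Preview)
Your outline has the right architecture --- analytic atoms $b_I$ times NTV-type vectors, diagonal controlled by the imported Carleson condition, off-diagonal by decay --- and you are right that the cross-scale interaction is the crux. The gap is that Bergman reproducing kernels do not have enough cancellation to close that step. For $I\subset J\subset K$ one has $\langle b_I,b_J\rangle\approx(|I|/|J|)^{3/2}$, so with $|c_Ic_J|\approx(|I||J|)^{1/2}$ the off-diagonal contribution over all nested pairs in $Q_K$ is, after Cauchy--Schwarz on the vector factors, at best $\lesssim\sqrt{n}\,|K|$ rather than $|K|$; ``orthogonality of the $v_I$ within generations'' does not help because the bad pairs are cross-generational, and the NTV bound $\sum_{I\subset K}|I||\langle v_I,e\rangle|^2\lesssim|K|$ only controls $\ell^2$ sums, not the $\ell^1$ sums that appear here. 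The monomial alternative $b_I(w)=w^{n_I}$ buys back orthogonality but loses all spatial localisation, so the Carleson-box estimate for $I\notin\DC(K)$ then fails. Your lower-bound route via ``dyadicisation of $\mu$ agrees with the NTV weight'' runs into the same problem: $\mu(T_I)=\int_{T_I}\phi\otimes\phi\,dA_1$ is a full double sum over $J,J'$, and identifying it with $\alpha_I\,v_I\otimes v_I$ up to constants again requires controlling the off-diagonal.

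The paper's solution is to take for the building blocks (the derivative of) a Littlewood--Paley wavelet, so that the half-plane analogues satisfy the exact identity $\int_{\C_+}f_I\overline{f_J}\,y\,dx\,dy=|I|\,\delta_{IJ}$. This lets the off-diagonal sum be bounded \emph{with no reference to the vectors}: for $I,J\in\DC(K)$, $I\ne J$, one writes $\int_{\tilde Q_K}=-\int_{\tilde Q_K^c}$ and uses decay outside the box. Because no sufficiently regular wavelet can be analytic, the paper first runs the entire Carleson-intensity argument for the harmonic function $\varphi=\sum_I g_I\omega_I$, then passes to $\phi=\sum_I g_I^+\omega_I$ via the Hilbert transform, which preserves both the Fourier support (hence the localisation estimates) and $L^2$ orthogonality; the analytic bound follows from $|\langle\phi,e\rangle|^2\lesssim|\langle\varphi,e\rangle|^2+|\langle H\varphi,e\rangle|^2$. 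The vectors $\omega_I$ are not ``orthonormal within a generation'' but carry phases $e^{2\pi i 2^l C_I}$ tuned so that the diagonal Carleson sum (Lemma~\ref{Lemma:DiagonalTerms}) and the lower-bound computation both reduce to explicit geometric sums; the embedding is broken by testing against the lacunary analytic function $E(w)=\sum_{l=1}^N w^{2^l}e_l$ and computing the resulting integrals exactly from the frequency support of $\hat\psi$, rather than by comparing to a dyadic embedding.
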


Note that Theorem \ref{Theorem:Main} asserts a smaller estimate of dimensional growth than Proposition \ref{Proposition:NPTV}. It may still be that Theorem \ref{Theorem:Main} is sharp for measures with rank one-valued density function.

Apart from the explicit construction, the main novelty of this paper (compared to \cite{Nazarov-Pisier-Treil-Volberg2002:EstsVecCarlesonEmbThmVecParaprods}) is that the measure in Theorem \ref{Theorem:Main} has a very simple form. The original motivation for this paper was to study a certain class of Hankel-like operators appearing naturally in control theory, see \citelist{\cite{Jacob-Rydhe-Wynn2014:WeightWeissConjRKTGenHankOps}\cite{Jacob-Partington-Pott-Wynn2015:BAdmGContrSemGrps}}. In that setting, the particular form of the measure in Theorem \ref{Theorem:Main} is indeed critical.

We demonstrate two different ways of transferring Theorem \ref{Theorem:Main} to the case where $\HC$ is infinite-dimensional. The first one gives an explicit construction of the corresponding measure. We leave the proof as an exercise.
\begin{corollary}\label{Corollary:Explicit}
	Let $c>0$ be the universal constant, whose existence is guaranteed by Theorem \ref{Theorem:Main}. For each $N\in\N$, let $\HC_N$ denote a Hilbert space of dimension $N$, and let $\mu_N$ be a measure such that $\|\mu_N\|_{\IC}=1$, and
	\begin{equation*}
	\|\PC\|_{H^2(\T,\HC_N ) \to L^2(\D,\HC_N,d\mu_N)}\ge c (\log N)^{1/2}.
	\end{equation*}
	Let $\HC=\oplus_{N=1}^\infty \HC_N$, and $\mu=\oplus_{N=1}^\infty \mu_N$. Then $\|\mu\|_\IC=1$, while $\PC:H^2(\T,\HC) \to L^2(\D,\HC,d\mu)$ is unbounded.
\end{corollary}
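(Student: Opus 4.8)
The plan is to check, in order, three soft facts about the direct sum $\HC=\oplus_{N=1}^\infty\HC_N$ and $\mu=\oplus_{N=1}^\infty\mu_N$: that $\mu$ is a bona fide $\LC_+(\HC)$-valued measure on $\D$, that $\|\mu\|_\IC=1$, and that $\PC$ is unbounded. First I would make precise that $\mu(E)$, for a Borel set $E\subset\D$, is the block-diagonal operator on $\HC$ whose $N$-th block is $\mu_N(E)$. Taking the $\mu_N$ to be of the special form furnished by Theorem \ref{Theorem:Main} (so in particular absolutely continuous, hence atomless), we have $\mu_N(E)\le\mu_N(\D)=\mu_N(Q_\T)\le\|\mu_N\|_\IC\,\mathrm{Id}_{\HC_N}=\mathrm{Id}_{\HC_N}$ since $Q_\T=\D\setminus\{0\}$; thus each block is a positive contraction, so $\mu(E)$ is a well-defined positive contraction on $\HC$. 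Countable additivity in the strong operator topology transfers to the direct sum by dominated convergence using the uniform bound $\|\mu_N(\cdot)\|\le1$: for $e=(e_N)_N\in\HC$ and pairwise disjoint $(E_k)_k$ one has $\|(\mu(\bigcup_k E_k)-\sum_{k=1}^M\mu(E_k))e\|_\HC^2=\sum_N\|(\mu_N(\bigcup_k E_k)-\sum_{k=1}^M\mu_N(E_k))e_N\|_{\HC_N}^2$, where each summand tends to $0$ and is dominated by $4\|e_N\|_{\HC_N}^2$.

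Next, for the Carleson intensity: given a unit vector $e=(e_N)_N$, so $\sum_N\|e_N\|_{\HC_N}^2=1$, and an arc $I\subset\T$,
\[
\langle\mu(Q_I)e,e\rangle_\HC=\sum_N\langle\mu_N(Q_I)e_N,e_N\rangle_{\HC_N}\le\sum_N\|\mu_N\|_\IC\|e_N\|_{\HC_N}^2=1,
\]
whence $\|\mu\|_\IC\le1$; the reverse inequality is immediate by testing with a unit vector supported in a single summand $\HC_{N_0}$ with $\|\mu_{N_0}\|_\IC=1$. For unboundedness, pick for each $N$ a function $f_N\in H^2(\T,\HC_N)$ with $\|f_N\|_{H^2(\T,\HC_N)}=1$ and $\|\PC f_N\|_{L^2(\D,\HC_N,d\mu_N)}\ge\tfrac c2(\log N)^{1/2}$, which exists since the operator norm is $\ge c(\log N)^{1/2}$. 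View $f_N$ as an element of $H^2(\T,\HC)$ via the isometric inclusion $\HC_N\hookrightarrow\HC$; its harmonic extension, computed in $\HC$, still takes values in the $\HC_N$-block, so because $\mu$ is block-diagonal
\[
\|\PC f_N\|_{L^2(\D,\HC,d\mu)}^2=\int_\D\langle d\mu\,\PC f_N,\PC f_N\rangle_\HC=\int_\D\langle d\mu_N\,\PC f_N,\PC f_N\rangle_{\HC_N}=\|\PC f_N\|_{L^2(\D,\HC_N,d\mu_N)}^2.
\]
Thus $\|f_N\|_{H^2(\T,\HC)}=1$ while $\|\PC f_N\|_{L^2(\D,\HC,d\mu)}\ge\tfrac c2(\log N)^{1/2}\to\infty$, so $\PC$ cannot be bounded.

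The argument is entirely elementary, and no step presents a real obstacle; the only point deserving any care — the "hard part", such as it is — is the bookkeeping in the first step, verifying that the block-diagonal $\mu$ is a countably additive $\LC_+(\HC)$-valued measure taking values in bounded operators. This reduces to the observation that the normalization $\|\mu_N\|_\IC=1$ already caps the total mass $\mu_N(\D)$ by the identity of $\HC_N$, so that the blocks are uniformly bounded and the direct sum converges in the strong operator topology.
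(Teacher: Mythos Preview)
Your proof is correct and complete; the paper itself leaves this corollary as an exercise and offers no proof, so there is nothing to compare against. Your careful handling of the one mildly delicate point---that the uniform normalization $\|\mu_N\|_\IC=1$ together with absolute continuity of the $\mu_N$ forces $\mu_N(\D)\le\mathrm{Id}_{\HC_N}$, so that the block-diagonal sum defines a bona fide bounded-operator-valued measure---is exactly the sort of verification the exercise calls for.
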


A feature of Theorem \ref{Theorem:Main} which is lost in Corollary \ref{Corollary:Explicit} is the simple form of the measure. We can preserve this feature, at the cost of losing the explicit representation.
\begin{corollary}\label{Corollary:Existence}
	If $\HC $ is infinite dimensional, then there exists an $\LC_+(\HC)$-valued measure $\mu$ on $\D$, such that $\|\mu\|_\IC<\infty$, while $\PC:H^2(\T,\HC ) \to L^2(\D,\HC,d\mu)$ is unbounded. Furthermore, $\mu$ has the property that 
	\[
	d\mu(w)=\phi(w)\otimes \phi(w)(1-|w|^2)\, dA(w),
	\]
	where $\phi:\D\to\HC$ is analytic.
\end{corollary}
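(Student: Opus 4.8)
The plan is to build the infinite-dimensional measure from the finite-dimensional examples of Theorem \ref{Theorem:Main}, but in a way that respects the tensor-square structure. The naive approach of Corollary \ref{Corollary:Explicit} -- taking an orthogonal direct sum $\mu=\oplus_N\mu_N$ on $\HC=\oplus_N\HC_N$ -- does \emph{not} obviously preserve the form $d\mu=\phi\otimes\phi\,(1-|w|^2)\,dA$, because a direct sum of rank-one operators is not rank one, and in general $\phi_1(w)\oplus\phi_2(w)$ gives $(\phi_1\oplus\phi_2)\otimes(\phi_1\oplus\phi_2)$ which has unwanted cross terms $\phi_1(w)\otimes\phi_2(w)$. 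So instead I would proceed non-constructively: use Theorem \ref{Theorem:Main} to produce, for each $N$, an analytic $\phi_N:\D\to\HC_N$ with $\|\mu_N\|_\IC\le 1$ and embedding norm at least $c(\log N)^{1/2}$, normalize, and then invoke a compactness / weak-limit argument to extract a single analytic $\phi:\D\to\HC$ on the infinite-dimensional space that inherits large (in fact unbounded) embedding norm while keeping finite Carleson intensity.

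The key steps, in order. First, fix an increasing sequence of finite-rank orthogonal projections $P_N$ on the infinite-dimensional $\HC$ with ranges $\HC_N'\cong\HC_N$ of dimension $N$, and transport each $\phi_N$ into $\HC$ as $\psi_N:\D\to\HC_N'\subset\HC$. Each $d\mu_N(w)=\psi_N(w)\otimes\psi_N(w)(1-|w|^2)\,dA(w)$ then has $\|\mu_N\|_\IC\le 1$ and $\|\PC\|_{H^2(\T,\HC)\to L^2(\D,\HC,d\mu_N)}\ge c(\log N)^{1/2}$, since the embedding norm for a rank-one density only sees the scalar function $\langle g(w),\psi_N(w)\rangle_\HC$ and is unchanged under isometric inclusion. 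Second -- this is the heart of the argument -- I would show that one can select the unitaries/placements and a rapidly growing subsequence $N_k$ so that the single function $\phi=\sum_k \epsilon_k\psi_{N_k}$, for suitable small scalars $\epsilon_k>0$, is analytic on $\D$ (the sum converges locally uniformly in $\HC$), has $d\mu=\phi\otimes\phi\,(1-|w|^2)\,dA$ with $\|\mu\|_\IC<\infty$, and still has unbounded harmonic embedding. Third, verify $\|\mu\|_\IC<\infty$: on a Carleson square $Q_I$, $\langle\mu(Q_I)e,e\rangle_\HC=\int_{Q_I}|\langle\phi(w),e\rangle_\HC|^2(1-|w|^2)\,dA(w)$; expanding $\phi$ and using Cauchy--Schwarz across the $k$-sum together with $\sum_k\epsilon_k<\infty$ (or $\ell^2$ summability) reduces this to a weighted sum of the intensities $\|\mu_{N_k}\|_\IC\le 1$, giving a finite bound uniform in $I$. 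Fourth, verify unboundedness: for each $k$, test the embedding on a function $g_k\in H^2(\T,\HC)$ (essentially the near-extremal function for $\mu_{N_k}$, placed in $\HC_{N_k}'$) and observe that $\|\PC g_k\|_{L^2(d\mu)}\ge \epsilon_k\|\PC g_k\|_{L^2(d\mu_{N_k})}$ minus controllable error from the other terms -- here orthogonality of the ranges $\HC_{N_k}'$ is crucial, as it kills most cross terms -- so the ratio $\|\PC g_k\|_{L^2(d\mu)}/\|g_k\|_{H^2}$ is at least a fixed multiple of $\epsilon_k c(\log N_k)^{1/2}$, which can be forced to $\infty$ by choosing $N_k$ growing fast enough relative to the decay of $\epsilon_k$ (e.g. $\epsilon_k=2^{-k}$ and $\log N_k\ge 4^k$).

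The main obstacle I anticipate is Step 3 together with the coupling in Step 4: one must choose $\epsilon_k$ small enough that the cross terms $\epsilon_j\epsilon_k\langle\psi_{N_j}(w),e\rangle\overline{\langle\psi_{N_k}(w),e\rangle}$ do not inflate $\|\mu\|_\IC$ -- this is where I would lean on $\|\psi_{N_k}\|$ being controlled on Carleson squares by $\|\mu_{N_k}\|_\IC\le 1$ after integration, plus the orthogonality of supports in $\HC$ to make the matrix of cross-intensities diagonally dominant -- while \emph{simultaneously} choosing $N_k$ large enough that $\epsilon_k(\log N_k)^{1/2}\to\infty$. Since these two requirements pull in opposite directions only mildly (the intensity bound needs summability of $\epsilon_k$, independent of $N_k$, whereas the unboundedness needs $\log N_k\gg\epsilon_k^{-2}$, and $N$ ranges over all of $\N$), they are compatible: first fix $\epsilon_k=2^{-k}$, then choose $N_k$ accordingly. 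A minor technical point worth isolating as a preliminary lemma is that the harmonic embedding norm is insensitive to isometric inclusion of the target Hilbert space and to post-composition of $\phi$ with an isometry, which is immediate from the rank-one form of the density but should be stated. With that lemma in hand, the proof is a routine diagonalization, and I would present it at the level of detail above rather than grinding the Carleson-square estimate in full.
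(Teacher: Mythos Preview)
Your approach is correct and workable, but it is genuinely different from the paper's. The paper does not build $\phi$ as a weighted sum of the finite-dimensional examples at all. Instead it recasts the problem in terms of two Banach spaces of analytic $\HC$-valued functions: $BMOA_{\CC^*}(\HC)$, whose norm equals the embedding norm of the measure $(D\phi)\otimes(D\phi)\,dA_1$, and the larger space $BMOA_\WC(\HC)$, whose norm is comparable to the Carleson intensity of that same measure. The inclusion $BMOA_{\CC^*}\subset BMOA_\WC$ is trivial (condition~$(iii)$ is condition~$(ii)$ tested only on reproducing kernels), and Theorem~\ref{Theorem:Main}, applied inside finite-dimensional subspaces of $\HC$, shows the ratio of norms is unbounded. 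Were the inclusion an equality of sets, the open mapping theorem would force the norms to be equivalent; since they are not, the inclusion is strict, and any $\phi\in BMOA_\WC\setminus BMOA_{\CC^*}$ yields the desired measure. This is a one-line soft argument once the $BMOA$ framework is in place, and it is truly non-constructive. Your route, by contrast, is essentially constructive despite your labelling it otherwise: you write down $\phi=\sum_k\epsilon_k\psi_{N_k}$ and verify both properties directly. The paper's argument buys brevity and the link to the $BMOA$ hierarchy; yours buys an almost-explicit $\phi$ and avoids any appeal to completeness of the $BMOA$ spaces.

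Two minor corrections to your sketch. First, with the ranges $\HC_{N_k}'$ mutually orthogonal and $g_k$ valued in $\HC_{N_k}'$, the cross terms in Step~4 vanish \emph{identically}: $\langle g_k(w),\phi(w)\rangle_\HC=\epsilon_k\langle g_k(w),\psi_{N_k}(w)\rangle_\HC$ exactly, so there is no error to control. Second, your numerical choice $\epsilon_k=2^{-k}$, $\log N_k\ge 4^k$ gives only $\epsilon_k(\log N_k)^{1/2}\ge 1$, which stays bounded; you need $\log N_k/4^k\to\infty$, e.g.\ $\log N_k=k\cdot 4^k$.
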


The paper is structured as follows: In Section \ref{Sec:Notation} we fix some further notation. In Section \ref{Sec:BMOA} we discuss how our results relate to a certain class of Hankel-type operators appearing naturally in control theory, and to some vector-valued generalizations of bounded mean oscillation. The discussion incidentally leads to a proof of Corollary \ref{Corollary:Existence}. In Section \ref{Sec:Proof} we present the proof of Theorem \ref{Theorem:Main}. Some parts of the paper are quite technical, and it is therefor written with the intention that the level of technicality should roughly be an increasing function of page number.

\section{Notation}\label{Sec:Notation}
We use the standard notation $\Z$, $\R$, and $\C$ for the respective rings of integers, real numbers, and complex numbers. By $\N$ we denote the set of strictly positive elements of $\Z$, while $\N\cup\{0\}$ is denoted by $\N_0$. We let $\D  =\{w\in\C;|w|<1\}$, $\T=\{w\in\C;|w|=1\}$, and $\C_+=\{z=x+iy\in\C;y>0\}$. We identify $\C _+/\Z $ with $\D  $ (and $\R/\Z $ with $\T$) using the map $z\mapsto e^{2\pi i z}$. Throughout this paper, we use the generic notation $z=x+iy$ for points in $\C_+$, and $w=e^{2\pi i z}$ for points in $\D$. The respective Lebesgue measures on $\R$ and $\C$ are denoted by $m$ and $A$. It will be convenient to define the weighted area measure $A_1$ on $\D$ by $dA_1(w)=(1-|w|^2)\, dA(w)$.

Given two parametrized sets of nonnegative numbers $\{A_i\}_{i\in I}$ and $\{B_i\}_{i\in I}$, we use the notation $A_i\lesssim B_i$, $i\in I$ to indicate the existence of a positive constant $C$ such that $A_i\le CB_i$ whenever $i\in I$. We then say that $A_i$ is bounded by $B_i$, and refer to $C$ as a bound. Sometimes we allow ourselves to not mention the index set $I$ and instead let it be implicit from the context. If $A_i\lesssim B_i$ and $B_i\lesssim A_i$, then we write $A_i\approx B_i$. We then say that $A_i$ and $B_i$ are comparable.

We let $\DC (\R)$ denote the set $\{[2^{-j}k,2^{-j}(k+1));j,k\in\Z \}$ of dyadic sub intervals of $\R$. The set of dyadic sub intervals of $I\in\DC (\R)$ is denoted by $\DC (I)$. With the identification $[0,1)\simeq\T$ described above, $\DC([0,1))$ is identified with the set $\DC(\T)$ of dyadic sub arcs of $\T$. The Lebesgue measure of $I\in\DC (\R)$ is denoted by $|I|$. The center point, left endpoint and right endpoint of $I\in\DC (\R)$ are denoted by $C_I,L_I$ and $R_I$ respectively. The rank of $I\in\DC (\R)$ is defined as rk$(I)=-\log _2(|I|)$. The $k$th generation of $I\in\DC (\R)$ is defined as $\DC _k(I)=\{J\in\DC (I);|J|=2^{-k}|I|\}$. If $I,J\in\DC (\R)$, and $|I|\le |J|$, then we define the relative distance between $I$ and $J$ as rd$(I,J)=|n|$, where $n$ is the unique number such that $I\subset J+n|J|$. Given $I\in\DC (\T)$, the corresponding Carleson square is given by 
\[
Q_I=\left\{w=e^{2\pi i (x+iy)}\in\D;x\in I, 0\le y\le -\frac{\log (1-|I|)}{2\pi}\right\}.
\]
We also define its half plane correspondent 
\[
\tilde Q_I=\left\{x+iy\in\C_+;x\in I, 0\le y\le -\frac{\log (1-|I|)}{2\pi}\right\}.
\]

The Poisson kernel for $\C _+$ is the function
\begin{equation*}
	P_y^{\C_+}(t)=\frac{1}{\pi}\frac{y}{t^2+y^2},\quad y>0,\ t\in\R.
\end{equation*}
We define the Poisson extension (to $\C _+$) of a suitable function $f:\R\to\C$ as
\[
f(z)=\int_\R f(t) P_y^{\C_+}(x-t)\, dt,\quad z=x+iy\in\C_+.
\]

The Fourier transform of an integrable function $f:\R\to\C$ is given by
\[
\FC f(\xi)=\hat f(\xi)=\int_\R f(x)e^{-2\pi i x\xi}\, dx,\quad \xi\in\R.
\]
We recall that $\FC P_y^{\C_+}(\xi)=e^{-2\pi |\xi|y}$. Let $\SC$ denote the Schwartz class of functions defined on $\R$. For $f\in\SC$, we define the analytic and the anti-analytic projections of $f$ as $f^+=P_+f=\FC^{-1}(\1_{\xi>0}\hat f)$ and $f^-=P_-f=\FC^{-1}(\1_{\xi<0}\hat f)$. As one might guess, the respective Poisson extensions of $f^+$ and $f^-$ are analytic and anti-analytic. We also define the Hilbert transform $Hf=-if^++if^-$.

We define the Wirtinger type differential operators $\partial =\partial_x-i\partial_y$, $\bar \partial =\partial_x+i\partial_y$, and the Laplacian $\Delta=\partial\bar{\partial}$. If $f$ is the Poisson extension of a Schwartz function, then we define $Df=-i\partial f^++i\bar{\partial}f^-=\FC^{-1}(\xi\mapsto 4\pi |\xi| \hat f(\xi))$.

Given a function $f:\R\to\C$, we define the periodization $g:\R\to\C$ by
\[
g(x)=\sum_{k\in\Z}f(x-k),\quad x\in\R.
\]
If $f$ is integrable, then it holds that
\[
\int_0^1g(x)e^{-2\pi ixn}\, dx=\hat f(n),\quad n\in\Z.
\]
This implies in particular that $P_{e^{-2\pi y}}$ is the periodization of $P_y^{\C_+}$. Thus, for the Poisson extension of $g$ it holds that
\begin{equation}\label{Eq:Periodization}
	g(w)=\sum_{n=0}^\infty \hat f(n) w^n+\sum_{n=-\infty}^{-1}\hat f(n)\bar w^n=\sum_{k\in\Z}f(z-k),\quad w=e^{2\pi i z}\in\D.
\end{equation}
We will use this in plenty.

Given $x,y\in\HC$, we define the linear rank one operator $x\otimes y:\HC\ni z\mapsto x\langle z,y\rangle_\HC\in\HC$. Note that the tensor product defined in this way has anti-linear dependence on its second factor.

\section{Hankel-type operators, and $BMOA$}\label{Sec:BMOA}

Let $\phi:\D\to\HC$ be an analytic function, with Taylor series representation $\phi(w)=\sum_{n=0}^\infty \hat \phi(n) w^n$, $w\in\D$. Given $\alpha>0$, we define the corresponding order fractional derivative of $\phi$ by $D^\alpha \phi(w)=\sum_{n=0}^\infty (1+n)^\alpha\hat \phi(n) w^n$, $w\in\D$. Also, we define the Hankel operator $\Gamma_\phi$ by the action
\[
\Gamma_\phi f(w)=\sum_{n=0}^\infty \left(\sum_{m=0}^\infty \hat \phi(m+n)\hat f(n)\right) w^n,\quad w\in\D,
\]
where $f$ is $\C$-valued, and analytic in a neighborhood of $\overline \D$.

Operators of the type $\Gamma_\phi D^\alpha$ appear naturally in control theory, specifically in the study of weighted admissibility, e.g. \citelist{\cite{Jacob-Rydhe-Wynn2014:WeightWeissConjRKTGenHankOps}\cite{Jacob-Partington-Pott-Wynn2015:BAdmGContrSemGrps}}. Hardy space boundedness properties of $\Gamma_\phi D^\alpha$ have been studied in \cite{Rydhe2016:VecHankOpsCarlesonEmbsBMOA} (see also \cite{Janson-Peetre1988:Paracomms} for the case $\HC=\C$). This study lead to different notions of $BMOA$, bounded mean oscillation of analytic functions. Consider the following three conditions:
\begin{itemize}
	\item[$(i)$] There exists $C>0$ such that for all $w_0\in\D$
	\begin{equation}\label{Eq:BMOAC}
	\int_\D\|(D\phi)(w)\|_\HC^2\frac{(1-|w|^2)}{|1-\bar w w_0|^2}\, dA(w)\le \frac{C^2}{1-|w_0|^2}.
	\end{equation}
	If condition $(i)$ is satisfied, then we say that $\phi\in BMOA_\CC(\HC)$. The space $BMOA_\CC(\HC)$ is equipped with the norm $\|\phi\|_{\CC}=\inf\{C;\textnormal{ \eqref{Eq:BMOAC} holds}\}$.
	\item[$(ii)$] There exists $C>0$ such that for all $f\in H^2(\HC)$ it holds that
	\begin{equation}\label{Eq:BMOAC*}
	\int_\D|\langle f(w),(D\phi)(w) \rangle_\HC|^2(1-|w|^2)\, dA(w)\le C^2\|f\|_{H^2(\HC)}^2.
	\end{equation}
	If condition $(ii)$ is satisfied, then we say that $\phi\in BMOA_{\CC^*}(\HC)$. The space $BMOA_{\CC^*}(\HC)$ is equipped with the norm $\|\phi\|_{\CC^*}=\inf\{C;\textnormal{ \eqref{Eq:BMOAC*} holds}\}$.
	\item[$(iii)$] There exists $C>0$ such that for all $x\in \HC$ and $w_0\in\D$ it holds that
	\begin{equation}\label{Eq:BMOAW}
	\int_\D|\langle x,(D\phi)(w) \rangle_\HC|^2\frac{(1-|w|^2)}{|1-\bar w w_0|^2}\, dA(w)\le \frac{C^2\|x\|_\HC^2}{1-|w_0|^2}.
	\end{equation}
	If condition $(iii)$ is satisfied, then we say that $\phi\in BMOA_\WC(\HC)$. We equip the space $BMOA_\WC(\HC)$ with the norm $\|\phi\|_{\WC}=\inf\{C;\textnormal{ \eqref{Eq:BMOAW} holds}\}$.
\end{itemize}
It is well-known, e.g. \cite{Garnett2007:BddAnalFcnsBook}, that
\[
BMOA_\CC(\C)= BMOA_{\CC^*}(\C)=BMOA_\WC(\C),
\]
with equivalent norms, and, moreover, that $\Gamma_\phi D^\alpha$ is bounded on $H^2(\T,\C)$ if and only if $D^\alpha \phi\in BMOA_\CC(\C)$, e.g. \cite{Peller2003:HankOpsBook}.

If $\HC$ is infinite-dimensional, then we obtain instead the following chain of strict inclusions:
\[
BMOA_\CC(\HC)\subsetneq BMOA_{\CC^*}(\HC)\subsetneq	BMOA_\WC(\HC).
\]
The first inclusion was obtained in \cite{Rydhe2016:VecHankOpsCarlesonEmbsBMOA}. We now justify the second inclusion: It holds that
\[
\|\phi\|_{\CC^*}=\|\PC\|_{H^2(\T,\HC)\to L^2(\D,\HC,(D\phi)\otimes (D\phi)\, dA_1)},
\]
and
\[
\|\phi\|_{\WC^*}\approx \|(D\phi) \otimes (D\phi)\, dA_1\|_\IC.
\]
The first is merely an algebraic reformulation, while the second is a typical exercise, cf. \cite[Lemma VI.3.3]{Garnett2007:BddAnalFcnsBook}. Furthermore, condition $(iii)$ just means that for some $C>0$, \eqref{Eq:BMOAC*} is satisfied for the class of functions $\{k_{w_0}x\}_{w_0\in\D,x\in\HC}$, where $k_{w_0}(w)=\frac{1}{1-\overline{w_0}w}$ are the reproducing kernels for $H^2$. We thus obtain that $BMOA_{\CC^*}(\HC)\subseteq BMOA_\WC(\HC)$. Strictness of the inclusion follows from Theorem \ref{Theorem:Main}. Indeed, if the inclusion was not strict, then the open mapping theorem would imply that the identity operator from $BMOA_\WC(\HC)$ into $BMOA_{\CC^*}(\HC)$ is bounded. This would contradict Theorem \ref{Theorem:Main}. As a result we obtain Corollary \ref{Corollary:Existence}.

The above results also have implications on the existence of so called reproducing kernel theses (RKT) for Hankel-like operators; another concept appearing naturally in control theory. We refer to \citelist{\cite{Partington-Weiss2000:AdmRightShift}\cite{Jacob-Rydhe-Wynn2014:WeightWeissConjRKTGenHankOps}} for details, but point out that by results in \cite{Jacob-Rydhe-Wynn2014:WeightWeissConjRKTGenHankOps}, $D^\alpha \Gamma_\phi$ has an RKT while $\Gamma_\phi D^\alpha$ does not. The inclusion $BMOA_{\CC^*}(\HC)\subsetneq	BMOA_\WC(\HC)$ implies, via results in \cite{Rydhe2016:VecHankOpsCarlesonEmbsBMOA}, that the adjoint operator $(\Gamma_\phi D^\alpha)^*$ also does not have an RKT.

\section{Proof of Theorem \ref{Theorem:Main}}\label{Sec:Proof}
The heuristics of the proof is as follows: Let $\delta_{w}$ denote a point mass at $w\in\D  $. The measure constructed in \cite{Nazarov-Treil-Volberg1997:CounterExInfDimCarlesonEmbThm} is of the form $d\mu = \sum_{I\in\DC }\delta_{w_I} \langle \cdot , \omega_I \rangle \varphi_I$, for some points $\{w_I\}_{I\in\DC (\T)}$ and vectors $\{\omega_I\}_{I\in\DC (\T)}$. If we formally define the function $F=\sum_{I\in\DC (\T)}\delta_{w_I}^{1/2} \omega_I$ then $d\mu=F\otimes F\, dA$. The idea behind the construction to follow is to find functions that behave like ``square roots of point masses'' in the sense that they are well localized, and essentially orthogonal. Our examples of such functions are given by smooth wavelets.

We give an outline of the proof: Let $N$ denote the dimension of $\HC$. In Subsection \ref{Subsec:Harmonic}, we construct a measure $d\nu=\varphi \otimes \varphi\, dA_1$, where $\varphi:\D\to \HC$ is harmonic. In Subsection \ref{Subsec:HarmIntensity}, we state three lemmas, and use these to prove that $\|\nu\|_\IC$ is uniformly bounded in $N$. In Subsection \ref{Subsec:Analytic}, we use $\nu$ to construct $\mu$ such that $d\mu=\phi \otimes \phi\, dA_1$, where $\phi:\D\to\HC$ is analytic. It will follow easily that $\|\mu\|_\IC$ is uniformly bounded in $N$. In Subsection \ref{Subsec:Embedding}, we prove that the corresponding embeddings are bounded below by $(\log N)^{1/2}$. In Subsection \ref{Subsec:Lemmata}, we prove the three lemmas used in Subsection \ref{Subsec:HarmIntensity}.

\subsection{The harmonic construction}\label{Subsec:Harmonic}

A Littlewood-Paley wavelet $\{\psi_I\}_{I\in\DC (\R)}$ is an orthonormal basis for $L^2(\R)$ satisfying the dilation translation relation
\begin{equation}
	\psi_I(x)=\frac{1}{|I|^{1/2}}\psi\Big(\frac{x-C_I}{|I|}\Big) \label{Eq:DilationTranslation},
\end{equation}
where $\psi$ is an even Schwartz function such that	$\hat\psi\ge 0$, $\hat \psi$ has support on $[-\frac{4}{3},-\frac{1}{3}]\cup[\frac{1}{3},\frac{4}{3}]$, and $\hat\psi>0$ on $[\frac{3}{8},\frac{5}{4}]$. Such a wavelet is constructed in \cite[Chapter 3]{Meyer1992:WaveletsOpsBook}.

For $I\in\DC (\R)$, we define the functions $f_I=|I|^{1/2}(D\psi_I)$. For $I\in\DC (\T)$, we define the corresponding periodizations $g_I$ by
\begin{equation*}
g_I(e^{2\pi i x})=\sum_{k\in\Z } f_I(x-k),\qquad x\in\R.
\end{equation*}
The family $\{g_I\}_{I\in\DC (\T)}$ is the first of two main ingredients in the construction.

The second ingredient is a family of vectors which is constructed as follows: Let $\{e_l\}_{l=1}^{N}$ be an orthonormal basis for $\HC$, and define the numbers $a_l=\frac{1}{l(\log N)^{1/2}}$, where $1\le l\le N$. For $I\in\DC (\T)$ with $\rk(I)=j\in[1,N]$, we define
\[
\omega_I=\sum_{l=0}^{j-1}a_{j-l}e_le^{2\pi i 2^lC_I}.
\]
For intervals of other ranks we let $\omega_I=0$.	The function that we want is now given by 
\[
\varphi=\sum_{I\in\DC (\T)}g_I\omega_I.
\]
	
\subsection{The Carleson intensity is good} \label{Subsec:HarmIntensity}

To prove that $\|\nu \|_\IC$ is uniformly bounded in $N$, we need three properties of the functions $g_I$. The first one is just a description of how the norms of these functions scale with the size of $I$, and follows more or less by a change of variables:

\begin{lemma}\label{Lemma:Scaling}
	\begin{equation*}
		\int_{\D  }|g_I|^2\, dA_1\lesssim |I|,\quad I\in\DC(\T).
	\end{equation*}
\end{lemma}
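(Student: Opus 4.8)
The plan is to reduce the periodized integral on $\D$ to an integral on the half-plane strip and then to exploit the Schwartz decay of $\psi$. First I would recall that $g_I$ is the periodization of $f_I = |I|^{1/2}(D\psi_I)$, so by \eqref{Eq:Periodization} the Poisson extension $g_I(w)$ for $w=e^{2\pi i z}$ equals $\sum_{k\in\Z} f_I(z-k)$. Since the weighted measure $dA_1(w)=(1-|w|^2)\,dA(w)$ on $\D$ pulls back, under $z\mapsto e^{2\pi i z}$, to a measure on the fundamental strip $\{0\le x<1\}\subset\C_+$ that is comparable to $y\,dx\,dy$ for $y$ bounded (and decays exponentially as $y\to\infty$), it suffices to bound $\int_{0}^{1}\int_0^\infty \big|\sum_{k}f_I(z-k)\big|^2\, y\,dx\,dy$. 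Unfolding the periodization, this is dominated by $\int_{\C_+}|f_I(z)|^2\, y\,dx\,dy$ (up to the exponential-decay factor, which only helps), i.e. I must show $\int_{\C_+}|(D\psi_I)(z)|^2\, y\,dx\,dy \lesssim 1$, since the $|I|^{1/2}$ prefactor squared is $|I|$.

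Next I would compute this half-plane integral using the Fourier/Littlewood–Paley identity $Df=\FC^{-1}(\xi\mapsto 4\pi|\xi|\hat f(\xi))$ together with $\FC P_y^{\C_+}(\xi)=e^{-2\pi|\xi|y}$, so that the Poisson extension of $D\psi_I$ has Fourier transform $4\pi|\xi|\,\widehat{\psi_I}(\xi)e^{-2\pi|\xi|y}$. By Plancherel in $x$ for each fixed $y$,
\[
\int_{\C_+}|(D\psi_I)(z)|^2\, y\, dx\, dy = \int_0^\infty\!\!\int_\R (4\pi|\xi|)^2|\widehat{\psi_I}(\xi)|^2 e^{-4\pi|\xi|y}\, d\xi\, y\, dy.
\]
Doing the $y$-integral first, $\int_0^\infty y\, e^{-4\pi|\xi|y}\, dy = (4\pi|\xi|)^{-2}$, so the whole expression collapses to $\int_\R|\widehat{\psi_I}(\xi)|^2\, d\xi = \|\psi_I\|_{L^2(\R)}^2 = 1$ by Plancherel and the normalization \eqref{Eq:DilationTranslation}. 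This is clean and scale-invariant, which is exactly why the bound is uniform in $I$.

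The one genuine technical point — the "main obstacle," though a mild one — is justifying the passage from the disc integral against $dA_1$ to the half-plane integral against $y\,dx\,dy$, i.e. controlling the periodization cross-terms and the change of variables. Here I would use that $\psi$, hence $D\psi$, is Schwartz, so $|f_I(x-k)|$ decays rapidly in $|k|$ uniformly on the strip $0\le x<1$ after rescaling; the sum $\sum_k$ and its square are therefore dominated by a single-term integral by Cauchy–Schwarz with a summable weight, and the region $y \gtrsim 1$ contributes a negligible exponentially small amount because $(1-|w|^2)\asymp y\, e^{-2\pi y}$ there while $(1-|w|^2)\asymp y$ for $y$ small. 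Since we only need an upper bound ($\lesssim$), all of these estimates go the convenient direction, and no lower bound or sharp constant is required.
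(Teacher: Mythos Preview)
Your proof is correct and follows the same overall strategy as the paper: pull back to the half-plane strip via $w=e^{2\pi i z}$, bound the weight $(1-|w|^2)\,dA(w)$ by $y\,dx\,dy$, and reduce to the Littlewood--Paley integral $\int_{\C_+}|D\psi_I|^2\, y\,dx\,dy = \|\psi_I\|_{L^2}^2=1$. Your Plancherel/Fubini computation of that integral is equivalent to the paper's Green's formula argument (Lemma~\ref{Lemma:Littlewood-Paley}). The one place you diverge is in handling the periodization cross-terms: after unfolding one obtains $\sum_{l\in\Z}\int_{\C_+} f_I(z)\overline{f_I(z-l)}\,y\,dx\,dy$, and you control the $l\neq 0$ terms by Schwartz decay and Cauchy--Schwarz. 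The paper instead observes that $f_I(\cdot-l)=f_{I+l}$, and since $I$ and $I+l$ are distinct dyadic intervals for $l\neq 0$ and $|I|\le 1$, these cross-terms vanish \emph{exactly} by the orthogonality in Lemma~\ref{Lemma:Littlewood-Paley}. Both arguments work; the orthogonality route is shorter and gives an identity rather than an estimate, which is why the paper isolates Lemma~\ref{Lemma:Littlewood-Paley} first.
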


Now consider the measure given by $d\nu=\varphi \otimes \varphi\, dA_1$. Note that 
\begin{equation}\label{Eq:TensorSquare}
\varphi \otimes \varphi =\sum_{I,J\in\DC(\T)}g_I\overline{g_J} (\omega_I\otimes \omega_J ).
\end{equation}	
The diagonal terms of this sum can be estimated using the following:	
\begin{lemma}\label{Lemma:DiagonalTerms}
	\begin{equation*}
	\sum_{I\in\DC (K)}|\langle \omega_I,e\rangle_\HC|^2|I|\lesssim |K|\|e\|^2,\quad K\in\DC (\T),\ e\in\HC.
	\end{equation*}
\end{lemma}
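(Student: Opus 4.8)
The plan is to expand the squared inner product using the explicit formula for $\omega_I$ and exploit the orthonormality of $\{e_l\}$ together with the summability of the sequence $\{a_l^2\}$. Write $e=\sum_{l\ge 0}c_l e_l$ with $\sum_l|c_l|^2=\|e\|_\HC^2$. For $I\in\DC(\T)$ with $\rk(I)=j\in[1,N]$ we have $\omega_I=\sum_{l=0}^{j-1}a_{j-l}e_l e^{2\pi i 2^l C_I}$, so $\langle\omega_I,e\rangle_\HC=\sum_{l=0}^{j-1}a_{j-l}\overline{c_l}e^{2\pi i 2^l C_I}$, and hence $|\langle\omega_I,e\rangle_\HC|^2\le\Big(\sum_{l=0}^{j-1}a_{j-l}|c_l|\Big)^2$. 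The first key step is therefore to bound $\Big(\sum_{l=0}^{j-1}a_{j-l}|c_l|\Big)^2$ in a way that, after summing over $I\in\DC_m(K)$ for each fixed generation $m$ and then over $m$, produces the desired $|K|\|e\|_\HC^2$.

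Next I would organize the sum by generation: $\sum_{I\in\DC(K)}|\langle\omega_I,e\rangle_\HC|^2|I|=\sum_{m\ge 0}|I|\cdot\#\DC_m(K)\cdot(\text{typical term})$, but more carefully, for each $m$ there are $2^m$ intervals $I\in\DC_m(K)$, each of length $2^{-m}|K|$, and each has $\rk(I)=\rk(K)+m=:j$. Since the bound $\Big(\sum_{l=0}^{j-1}a_{j-l}|c_l|\Big)^2$ does not depend on which interval of that generation we pick (only on $j$), summing over the $2^m$ intervals and multiplying by $2^{-m}|K|$ gives exactly $|K|\Big(\sum_{l=0}^{j-1}a_{j-l}|c_l|\Big)^2$ for each $m$. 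So it remains to show $\sum_{j\ge\rk(K)+1}\Big(\sum_{l=0}^{j-1}a_{j-l}|c_l|\Big)^2\lesssim\|e\|_\HC^2$, or even more simply $\sum_{j\ge 1}\Big(\sum_{l=0}^{j-1}a_{j-l}|c_l|\Big)^2\lesssim\sum_l|c_l|^2$. This is a discrete convolution inequality: the inner sum is $(\{a_k\}*\{|c_l|\})(j)$ with $a_k=1/(k(\log N)^{1/2})$, so by Young's inequality the $\ell^2$ norm of the convolution is bounded by $\|\{a_k\}\|_{\ell^1}\|\{|c_l|\}\|_{\ell^2}$. Since $\|\{a_k\}\|_{\ell^1}=\frac{1}{(\log N)^{1/2}}\sum_{k=1}^{N}\frac{1}{k}\approx\frac{\log N}{(\log N)^{1/2}}=(\log N)^{1/2}$, this only gives a bound of order $(\log N)\|e\|_\HC^2$, which is too weak.

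The main obstacle is thus to gain back the factor $\log N$. I expect the resolution to use the Cauchy--Schwarz inequality more cleverly inside the inner sum, pairing $a_{j-l}|c_l|=a_{j-l}^{1/2}\cdot(a_{j-l}^{1/2}|c_l|)$, so that $\Big(\sum_{l=0}^{j-1}a_{j-l}|c_l|\Big)^2\le\Big(\sum_{k=1}^{j}a_k\Big)\Big(\sum_{l=0}^{j-1}a_{j-l}|c_l|^2\Big)\le\|\{a_k\}\|_{\ell^1}\sum_{l=0}^{j-1}a_{j-l}|c_l|^2$. Summing over $j$ and interchanging the order of summation, $\sum_{j\ge 1}\sum_{l=0}^{j-1}a_{j-l}|c_l|^2=\sum_l|c_l|^2\sum_{k\ge 1}a_k=\|\{a_k\}\|_{\ell^1}\sum_l|c_l|^2$, giving the overall bound $\|\{a_k\}\|_{\ell^1}^2\|e\|_\HC^2\approx(\log N)\|e\|_\HC^2$ — still off by a logarithm. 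So the genuinely delicate point is whether the oscillatory factors $e^{2\pi i 2^l C_I}$ must be used: after summing over the $2^m$ intervals $I$ in a generation, cross terms $e^{2\pi i 2^l C_I}\overline{e^{2\pi i 2^{l'}C_I}}$ with $l\ne l'$ should largely cancel because the centers $C_I$ are equidistributed, and this cancellation is what saves the extra $\log N$. I would therefore expand $|\langle\omega_I,e\rangle_\HC|^2=\sum_{l,l'}a_{j-l}a_{j-l'}\overline{c_l}c_{l'}e^{2\pi i(2^l-2^{l'})C_I}$, sum over $I\in\DC_m(K)$ using that $\sum_{I\in\DC_m(K)}e^{2\pi i(2^l-2^{l'})C_I}$ vanishes (or is tiny) unless $l=l'$, reduce to the diagonal $l=l'$ where $\sum_{l}a_{j-l}^2|c_l|^2$ appears, and then sum over $m$ and $l$ using $\sum_k a_k^2=\frac{1}{\log N}\sum_{k=1}^N k^{-2}\lesssim\frac{1}{\log N}$ to land on $\lesssim|K|\|e\|_\HC^2$. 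Verifying the equidistribution cancellation — in particular checking that $2^l C_I$ ranges over a full set of residues as $I$ runs over $\DC_m(K)$ for the relevant range of $l$ — is the step I expect to require the most care.
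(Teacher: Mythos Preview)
Your final strategy---expand $|\langle\omega_I,e\rangle_\HC|^2$, sum the phases $e^{2\pi i(2^{l_1}-2^{l_2})C_I}$ over a fixed generation, and try to reduce to the diagonal $l_1=l_2$---is exactly the route the paper takes. The gap is in your expectation that the off-diagonal exponential sum ``vanishes (or is tiny)''. Writing $k=\rk(K)$, $j=\rk(I)$, geometric summation gives
\[
\sum_{I\in\DC_j(\T)\cap\DC(K)}e^{2\pi i(2^{l_1}-2^{l_2})C_I}
=\frac{1-e^{2\pi i(2^{l_1}-2^{l_2})2^{-k}}}{1-e^{2\pi i(2^{l_1}-2^{l_2})2^{-j}}}\qquad(l_1\ne l_2).
\]
If both $l_1,l_2\ge k$ the numerator vanishes, and you are fine. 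But if both $l_1,l_2<k$, the estimate $|1-e^{2\pi ix}|\approx|x|$ shows the ratio is $\approx 2^{j-k}=|K|/|I|$, i.e.\ \emph{the same size as the diagonal term}. There is no cancellation here, and a third regime $l_1\ge k>l_2$ gives an intermediate decay $\lesssim 2^{l_2-l_1}|K|/|I|$. Your plan to ``reduce to the diagonal'' therefore does not close.

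The paper handles the problematic regime $l_1,l_2<k$ by Cauchy--Schwarz in the form
\[
\sum_{\substack{l_1,l_2=0\\ l_1\ne l_2}}^{k-1}a_{j-l_1}a_{j-l_2}|b_{l_1}||b_{l_2}|
\le\Big(\sum_{l=0}^{k-1}a_{j-l}|b_l|\Big)^2
\le\sum_{l=0}^{k-1}a_{j-l}^2
=\sum_{l=j-k+1}^{j}a_l^2,
\]
and then sums over $j\ge k$ to get $\sum_{l=1}^N l\,a_l^2=\frac{1}{\log N}\sum_{l=1}^N l^{-1}\lesssim 1$. Note that this step uses the precise choice $a_l=1/(l(\log N)^{1/2})$ and would fail for a merely square-summable sequence; it is the reason the ``weighted $\ell^1$'' condition $\sum_l l\,a_l^2<\infty$ is exactly the right hypothesis. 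The mixed regime $l_1\ge k>l_2$ is dispatched similarly, exploiting the extra geometric factor $2^{l_2-l_1}$. Once these two off-diagonal pieces are controlled, the diagonal part goes through as you wrote, using $\sum_k a_k^2\lesssim 1$.
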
	
In \cite{Nazarov-Treil-Volberg1997:CounterExInfDimCarlesonEmbThm}, uniform boundedness of the Carleson intensity is essentially a dyadic version of Lemma \ref{Lemma:DiagonalTerms}. Since the functions $\{g_I\}$ do not have disjoint supports, we will also need to estimate the off-diagonal terms in \eqref{Eq:TensorSquare}. This is the main technical complication of this paper:	
\begin{lemma}\label{Lemma:OffDiagonalTerms}
	\begin{equation*}
	\sum_{\substack{I,J\in\DC (\T)\\
			\neg (I=J\in\DC (K))}}|\int_{Q_K} g_I\overline{g_J}\, dA_1|\lesssim |K|,\quad K\in \DC(\T).
	\end{equation*}
\end{lemma}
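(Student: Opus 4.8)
The plan is to exploit three features of the functions $g_I$: they are periodizations of the near-wavelets $f_I = |I|^{1/2}(D\psi_I)$, whose Fourier supports are frequency-localised to an annulus of size $\approx |I|^{-1}$; they decay rapidly away from $I$ (like a Schwartz function at scale $|I|$); and after the $D$ operator is applied their Poisson extensions to $\D$ are (up to the periodization) concentrated near the ``tent'' $\tilde Q_I$. First I would reduce the sum to pairs $(I,J)$ that actually interact. Because $\hat\psi$ is supported in $[-4/3,-1/3]\cup[1/3,4/3]$, the frequency supports of $f_I$ and $f_J$ are disjoint on the circle unless $\rk(I)$ and $\rk(J)$ differ by at most one; so in the sum only pairs with $|\,\rk(I)-\rk(J)\,|\le 1$ survive, and by symmetry it suffices to treat $\rk(J)\in\{\rk(I),\rk(I)+1\}$ (the contribution of genuinely different-scale but adjacent-rank pairs is handled exactly as the equal-scale case, with an extra factor of $2$). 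I would then split the dyadic intervals according to their rank $k$ and, for each $k$, organise the pairs $(I,J)$ by the relative distance $\rd(I,J)$.

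Next, for a fixed rank $k$ and a fixed displacement $n=\rd(I,J)$, I would estimate $\int_{Q_K} g_I\overline{g_J}\,dA_1$. The key pointwise input is that $g_I(w)$, thought of via $w=e^{2\pi i z}$ and the periodization identity \eqref{Eq:Periodization}, is controlled by $|I|^{1/2}$ times a bump adapted to $\tilde Q_I$ with Schwartz-type tails: roughly $|g_I(w)|\lesssim |I|^{-1/2}\,(1+\dist_I(x,y))^{-M}$ for any $M$, where $\dist_I$ is the natural hyperbolic-type distance at scale $|I|$ (this is where the smoothness of $\psi$ and the mapping properties of $D$ enter, and it should be stated as a preliminary pointwise lemma, or extracted from the proof of Lemma \ref{Lemma:Scaling}). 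Integrating the product of two such bumps against $dA_1=(1-|w|^2)\,dA(w)\approx y\,dx\,dy$ over $Q_K$, and using that the $y$-integral contributes a factor $\approx |I|^2$ and the $x$-integral over $Q_K$ contributes $\lesssim |K|/|I|$ intervals' worth of mass, one gets a bound of the shape
\[
\Big|\int_{Q_K} g_I\overline{g_J}\,dA_1\Big|\lesssim |I|\,(1+n)^{-M}
\]
for the pair with $\rd(I,J)=n$, uniformly in the position of $I$ inside $Q_K$; summing over the $\lesssim |K|/|I|$ choices of $I\subset K$ of rank $k$ (for $\rk(K)\le k$), over the $\lesssim 1$ choices of $J$ for each fixed $n$, and over $n\ge 1$ using $\sum_n (1+n)^{-M}<\infty$, yields a contribution $\lesssim |K|\cdot |I|^{-1}\cdot|I| = |K|$ from each rank $k$ — and then I must sum over $k$, which forces me to squeeze an extra summable factor (say $2^{-(k-\rk(K))}$ for $k\ge\rk(K)$, and rapid decay from the tails for $k<\rk(K)$) out of the estimate. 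For $k\ge \rk(K)$ this decay comes from the $y$-localisation: when $\rk(I)=k$ is much larger than $\rk(K)$, the support of $g_I$ in the $y$-variable lies near height $\approx 2^{-k}$, deep inside $Q_K$, and there the factor $(1-|w|^2)\approx 2^{-k}$ is small, giving the needed geometric gain; for $k<\rk(K)$, the interval $I$ is much larger than $K$ so $Q_K$ sits in the rapidly-decaying tail region of $g_I$ unless $I\supset K$, of which there are only $\rk(K)-k$ choices, each contributing $\lesssim |K|\,(\text{something summable in }\rk(K)-k)$.

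The main obstacle I anticipate is precisely this interplay between the off-diagonal spatial sum (over $\rd(I,J)$) and the cross-scale sum (over the rank $k$): one needs the pointwise bound on $g_I$ to be quantitatively strong enough — Schwartz decay in the appropriately rescaled variables \emph{and} the correct power of $(1-|w|^2)$ — that both sums converge with total mass $\lesssim |K|$. Getting the periodization right is a second, more bookkeeping-type obstacle: $g_I$ is a sum over $\Z$-translates of $f_I$, so near the ``boundary'' of a fundamental domain one must check that the wrapped-around copies do not spoil the localisation; this is controlled again by the Schwartz decay of $f_I$ together with $|I|\le 1$, but it has to be said. Once the pointwise lemma for $g_I$ and its Poisson extension is in hand, the rest is the geometric series estimate sketched above; I would organise the write-up as (1) pointwise bounds on $g_I$, (2) the single-pair integral estimate with decay in both $\rd(I,J)$ and $|\rk(I)-\rk(K)|$, (3) summation.
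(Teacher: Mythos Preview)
Your reduction to adjacent ranks is incorrect. Disjoint Fourier supports would make $\int_{\D} g_I\overline{g_J}\,dA_1$ vanish, but the lemma asks for $\int_{Q_K}$: integration over a Carleson square does not respect Fourier orthogonality (the angular integral is only over an arc), so pairs with $|\rk(I)-\rk(J)|\ge 2$ do contribute and must be estimated. The paper in fact treats such pairs at length (its Cases $(i)$--$(iv)$ cover all size relations between $|I|$, $|J|$, and $|K|$).

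The more serious gap is in your summation over scales. Your single-pair bound $\big|\int_{Q_K} g_I\overline{g_J}\,dA_1\big|\lesssim |I|\,(1+n)^{-M}$ already uses the weight $(1-|w|^2)\approx y$ to produce the factor $|I|$; you cannot invoke ``$(1-|w|^2)\approx 2^{-k}$ is small'' a second time to manufacture extra decay in $k-\rk(K)$. Concretely, for $I,J\in\DC_k(K)$ adjacent ($n=1$), one has $\int_{Q_K}|g_I\overline{g_J}|\,dA_1\approx |I|$; there are $\approx |K|/|I|$ such pairs, giving $\approx |K|$ per rank, and the sum over $k\ge\rk(K)$ diverges. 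What saves the day is \emph{cancellation}, not size: the paper establishes the exact Littlewood--Paley identity $\int_{\C_+} f_I\overline{f_J}\,y\,dx\,dy=|I|\delta_{IJ}$ (its Lemma~\ref{Lemma:Littlewood-Paley}) and uses it, in its Case $(vii)$, to rewrite $\int_{\tilde Q_K}f_I\overline{f_J}\,y\,dA=-\int_{\tilde Q_K^c}f_I\overline{f_J}\,y\,dA$ for $I,J\in\DC(K)$, $I\ne J$. On the complement, both $f_I$ and $f_J$ are in their tail regions, which supplies the missing summable factor. Your outline never invokes this orthogonality-over-the-whole-space trick, and without it the scheme (pointwise bound $\Rightarrow$ pair estimate $\Rightarrow$ sum) does not close.
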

	
These lemmas yield a short proof that $\|\nu \|_\IC$ is uniformly bounded: Assume that $\|e\|_\HC\le 1$. Then
\begin{align*}
	\int_{Q_K}\langle d\mu\, e,e\rangle_\HC = {} 
	& \underbrace{\sum_{I\in\DC (K)}\int_{Q_K}|g_I|^2|\langle \omega_I,e\rangle_\HC|^2\, dA_1}_{=:I_1} \\
	&+ \underbrace{\sum_{\substack{I,J\in\DC (\T) \\ \neg (I=J\in\DC (K))}}\int_{Q_K}g_I\overline{g_J}\langle \omega_I,e\rangle_\HC\langle e,\omega_J \rangle_\HC\, dA_1}_{=:I_2}.
\end{align*}
By Lemma \ref{Lemma:Scaling} and Lemma \ref{Lemma:DiagonalTerms}
\begin{align*}
	I_1\le \sum_{I\in\DC (K)}\int_{\D  }|g_I|^2|\langle \omega_I,e\rangle_\HC|^2\, dA_1 	\lesssim \sum_{I\in\DC (K)}|\langle \omega_I,e\rangle_\HC|^2|I|\lesssim |K|.
\end{align*}
The vectors $\{\omega_I\}_{I\in\DC(\T)}$ are easily seen to have less that unit norm, so by Lemma \ref{Lemma:OffDiagonalTerms}
\begin{equation*}
	I_2\le \sum_{\substack{I,J\in\DC (\T) \\ \neg (I=J\in\DC (K))}}|\int_{Q_K}g_I\overline{g_J}\, dA_1|\lesssim |K|.
\end{equation*}

\subsection{Making things analytic}\label{Subsec:Analytic}

Once we have the harmonic construction, the analytic ditto is obtained quite easily. The proof that $\|\nu \|_{CM}$ is uniformly bounded relies on orthogonality and localization of the functions $\{f_I\}_{I\in \DC (\R)}$. The localization in turn is obtained by the translation dilation relation \eqref{Eq:DilationTranslation}, combined with the fact that $\hat \psi$ vanishes in a neighborhood of $0$. The Hilbert transform preserves both orthogonality, and Fourier supports. Let $\tilde f_I=Hf_I$, and $\tilde g_I$ be the corresponding periodization. Repeating the proofs from section \ref{Subsec:HarmIntensity}, with $\{\tilde f_I\}_{I\in\DC(\R)}$ in place of $\{f_I\}_{I\in\DC(\R)}$, one sees that the measures
\[
\bigg(\sum_{I\in\DC (\T)}\tilde g_{I}\omega_{I} \bigg)\otimes \bigg(\sum_{I\in\DC (\T)}\tilde g_{I}\omega_{I} \bigg)\, dA_1
\]
have uniformly bounded Carleson intensity. We now define the analytic functions $f_I^+=\frac{1}{2}(f_I+i\tilde f_I)$, the corresponding periodizations $g_I^+$, 
\[
\phi=\sum_{I\in\DC (\T)}g_I^+\omega_I,
\]
and $d\mu=\phi\otimes \phi dA_1$. The functions $f_I^+$ are analytic and well localized, but not orthogonal. However, for an arbitrary unit vector $e\in\HC$ we have that
\begin{align*}
	|\langle \phi, e\rangle_\HC|^2 \lesssim |\langle \sum_{I\in\DC (\T)}g_I\omega_I, e\rangle_\HC |^2+ |\langle \sum_{I\in\DC (\T)}\tilde g_I\omega_I, e\rangle_\HC |^2.
\end{align*}
It immediately follows that $\|\mu\|_{CM}$ is uniformly bounded in $N$.

\begin{remark}
	It may seem odd to the reader that we do not simply let $\{f_I\}_{I\in \DC (\R)}$ be a family of analytic functions to begin with. The reason for this is that no analytic family, satisfying the additional regularity conditions that we need, is an orthonormal wavelet basis for $H^2(\R)$, as was demonstrated by Auscher \cite{Auscher1995:SolOfTwoProblsOnWavelets}.
\end{remark}

\subsection{Breaking the embedding}\label{Subsec:Embedding}
To prove that the embedding is bad, we follow closely \cite{Nazarov-Treil-Volberg1997:CounterExInfDimCarlesonEmbThm}. Consider the function $E(w)=\sum_{l=1}^Nw^{2^l}e_l$, $w\in\D$. Obviously $\|E\|_{H^2(\T,\HC)}^2=N$. Now
\begin{align*}
	\int_{\D}\langle d\mu\, E,E\rangle_\HC 
	&=
	\sum_{I_1,I_2\in\DC (\T)}\int_{\D}g_{I_1}^+\overline{g_{I_2}^+}\langle \omega_{I_1} ,E\rangle_\HC \langle E,\omega_{I_2} \rangle_\HC\, dA_1
	\\
	&=
	\sum_{\substack{0\le l_1 < j_1\le N \\0\le l_2 < j_2\le N \\ I_1\in\DC _{j_1}(\T)\\I_2\in\DC _{j_2}(\T)}}a_{j_1-l_1}a_{j_2-l_2}e^{2\pi i(2^{l_1}C_{I_1}-2^{l_2}C_{I_2})}\int_\D g_{I_1}^+\overline{g_{I_2}^+}\bar w^{2^{l_1}} w^{2^{l_2}}\, dA_1.
\end{align*}
The integrals are easily computed in terms of Taylor coefficients:
\begin{align*}
	\int g_{I_1}^+\overline{g_{I_2}^+}\bar w^{2^{l_1}} w^{2^{l_2}}\, dA_1
	=
	\pi\sum_{m=-2^{l_1}}^\infty\frac{\hat g_{I_1}^+(m+2^{l_1})\overline{\hat g_{I_2}^+(m+2^{l_2})}}{(m+2^{l_1}+2^{l_2}+1)(m+2^{l_1}+2^{l_2}+2)}.
\end{align*}
We consider fixed $j_1,j_2,l_1,l_2$, and use that $\hat g_I(n)=4\pi n|I|\hat\psi(n|I|)e^{-2\pi i nC_I}$ to compute
\begin{equation}\label{Eq:Sum}
	\sum_{\substack{I_1\in\DC _{j_1}(\T)\\I_2\in\DC _{j_2}(\T)}}e^{2\pi i(2^{l_1}C_{I_1}-2^{l_2}C_{I_2})}\int_\D g_{I_1}^+\overline{g_{I_2}^+}\bar w^{2^{l_1}} w^{2^{l_2}}\, dA_1 
	= 
	\sum_{m=-2^{l_1}}^\infty\alpha_m\beta_m.
\end{equation}
where
\[
\alpha_m=\frac{16\pi^32^{-j_1-j_2}(m+2^{l_1})(m+2^{l_2})\hat \psi^+ (\frac{m+2^{l_1}}{2^{j_1}})\overline{\hat \psi^+ (\frac{(m+2^{l_2})}{2^{j_2}})}}{(m+2^{l_1}+2^{l_2}+1)(m+2^{l_1}+2^{l_2}+2)},
\]
and
\[
\beta_m=\sum_{\substack{I_1\in\DC _{j_1}(\T)\\I_2\in\DC _{j_2}(\T)}}e^{-2\pi im(C_{I_1}-C_{I_2})}.
\]
We parametrize $I\in\DC_j(\T)$ by $C_I=(\frac{1}{2}+n)$, $0\le n\le 2^j-1$, and by geometric summation 
\begin{equation}\label{Eq:SumOverGeneration}
\beta_m=
\left\{
\begin{array}{rl}
2^{j_1+j_2}e^{-i\pi\big(\frac{m}{2^{j_1}}-\frac{m}{2^{j_2}}\big)}, &\textnormal{if }m\in 2^{j_1}\Z\cap 2^{j_2}\Z,
\\
0,&\text{otherwise}.
\end{array}
\right.
\end{equation}
This shows that the terms in the right-hand side of \eqref{Eq:Sum} vanish, unless $m=k_12^{j_1}=k_22^{j_2}$ for some $k_1,k_2\in\Z$. Assuming this restriction, we now consider $\alpha_m$. Exploiting the support of $\hat \psi^+$, we see that $\alpha_m$ vanishes, unless $\frac{1}{3}<k_1+2^{l_1-j_1},k_2+2^{l_2-j_2}<\frac{4}{3}$. Since $l< j$, this is only possible if $k_1,k_2\in\{0,1\}$. If $k_1=k_2=0$, then non-vanishing terms are precisely those for which $l_1= j_1-1$ and $l_2= j_2-1$. If $k_1=1$, and $k_2=0$, then $m=2^{j_1}=0$, which is of course impossible. Similarly, if $k_1=0$, and $k_2=1$, then all terms vanish. If $k_1=k_2=1$, then the terms vanish, unless $j_1=j_2=j$ and $l_1,l_2\le j-2$. Tracing back the calculations we have computed that
\begin{align*}
\frac{1}{16\pi^3}&\int_{\D}\langle d\mu\, E,E\rangle_\HC \\ 
= {} &\sum_{\substack{0\le l_1 < j_1 \le N\\ 0\le l_2 < j_2 \le N\\ l_1= j_1-1 \\ l_2= j_2-1}}a_{j_1-l_1}a_{j_2-l_2}
\frac{2^{l_1+l_2}\hat \psi^+ (2^{l_1-j_1})\overline{\hat \psi^+ (2^{l_2-j_2})}}{(2^{l_1}+2^{l_2}+1)(2^{l_1}+2^{l_2}+2)}
\\
&+ 
\sum_{\substack{1\le j\le N\\ 0\le l_1,l_2\le j-2}}a_{j-l_1}a_{j-l_2}
\frac{(2^{j}+2^{l_1})(2^{j}+2^{l_2})\hat \psi^+ (1+2^{l_1-j})\overline{\hat \psi^+ (1+2^{l_2-j})}}{(2^j+2^{l_1}+2^{l_2}+1)(2^j+2^{l_1}+2^{l_2}+2)}
\\
\gtrsim {} &
\sum_{\substack{0\le j\le N\\0\le l_1,l_2\le j-2}}a_{j-l_1}a_{j-l_2}\gtrsim N\log N,
\end{align*}
where the last estimate is an elementary calculation. Assuming the validity of Lemma \ref{Lemma:Scaling} through \ref{Lemma:OffDiagonalTerms}, this completes the proof of Theorem \ref{Theorem:Main}. \qed

\subsection{Properties of $g_I$}\label{Subsec:Lemmata}

Before proving Lemma \ref{Lemma:Scaling} through \ref{Lemma:OffDiagonalTerms}, we establish the following Littlewood-Paley type identity:

\begin{lemma}\label{Lemma:Littlewood-Paley}
	\begin{equation}\label{Eq:Littlewood-Paley}
	\int_{\C _+}f_I(x+iy)\overline{f_J(x+iy)}y\, dxdy=|I|\delta_{IJ}.
	\end{equation}
\end{lemma}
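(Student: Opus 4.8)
The plan is to reduce the whole statement, via Plancherel's theorem, to the orthonormality of the wavelet basis $\{\psi_I\}_{I\in\DC(\R)}$. First observe that, since $\hat\psi$ is supported in $[-\tfrac43,-\tfrac13]\cup[\tfrac13,\tfrac43]$, the function $\xi\mapsto 4\pi|\xi|\widehat{\psi_I}(\xi)$ is smooth, compactly supported, and vanishes in a neighbourhood of $0$; hence $f_I=|I|^{1/2}(D\psi_I)$ is a Schwartz function with
\[
\widehat{f_I}(\xi)=4\pi|\xi|\,|I|^{1/2}\widehat{\psi_I}(\xi),
\]
which is bounded and supported in $\{\tfrac{1}{3|I|}\le|\xi|\le\tfrac{4}{3|I|}\}$. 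For fixed $y>0$ the Poisson extension $x\mapsto f_I(x+iy)$ is the convolution $f_I*P_y^{\C_+}$, and since $\FC P_y^{\C_+}(\xi)=e^{-2\pi|\xi|y}$, Plancherel in the variable $x$ gives
\[
\int_\R f_I(x+iy)\overline{f_J(x+iy)}\,dx=\int_\R \widehat{f_I}(\xi)\overline{\widehat{f_J}(\xi)}\,e^{-4\pi|\xi|y}\,d\xi .
\]

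Next I would multiply this identity by $y$, integrate over $y\in(0,\infty)$, and interchange the order of integration. This interchange is legitimate because the integrand $|\widehat{f_I}(\xi)\widehat{f_J}(\xi)|\,y\,e^{-4\pi|\xi|y}$ is dominated by a constant multiple of $\1_{\{c\le|\xi|\le C\}}(\xi)\,y\,e^{-4\pi c y}$ for suitable $0<c\le C$ (here one uses precisely that $\widehat{f_I}$ is bounded and supported away from the origin), and the latter function is integrable on $\R\times(0,\infty)$. Using $\int_0^\infty y\,e^{-4\pi|\xi|y}\,dy=(4\pi\xi)^{-2}$ one obtains
\[
\int_{\C_+}f_I(x+iy)\overline{f_J(x+iy)}\,y\,dx\,dy=\int_\R\frac{\widehat{f_I}(\xi)\overline{\widehat{f_J}(\xi)}}{(4\pi\xi)^2}\,d\xi .
\]
Now insert $\widehat{f_I}(\xi)=4\pi|\xi|\,|I|^{1/2}\widehat{\psi_I}(\xi)$: the factor $(4\pi\xi)^2$ cancels exactly, leaving $|I|^{1/2}|J|^{1/2}\int_\R\widehat{\psi_I}(\xi)\overline{\widehat{\psi_J}(\xi)}\,d\xi$. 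By Plancherel once more this equals $|I|^{1/2}|J|^{1/2}\langle\psi_I,\psi_J\rangle_{L^2(\R)}$, and since $\{\psi_I\}_{I\in\DC(\R)}$ is an orthonormal basis of $L^2(\R)$ this is $|I|^{1/2}|J|^{1/2}\delta_{IJ}=|I|\delta_{IJ}$, which is the assertion.

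There is no genuine obstacle here: the lemma is just Plancherel applied twice — once in $x$, and then, after the weight $y$ has been integrated out, the resulting factor $(4\pi\xi)^{-2}$ undoes the Fourier multiplier of $D$ (squared). The only points deserving a word of care are the Fubini step described above and the trivial remark that when $|I|$ and $|J|$ differ by more than a factor of $4$ the supports of $\widehat{f_I}$ and $\widehat{f_J}$ are disjoint, so both sides vanish, consistently with $\delta_{IJ}$.
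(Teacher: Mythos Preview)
Your proof is correct, but it follows a different path from the paper's. The paper argues via the classical Littlewood--Paley/Green identity: one writes $\Delta|\psi_I|^2=|\partial\psi_I^+|^2+|\bar\partial\psi_I^-|^2$, observes that the analytic and anti-analytic pieces are orthogonal with respect to the weight $y\,dA$ (by Cauchy's theorem), and then applies Green's formula with $u=|\psi_I|^2$, $v=y$ on $\C_+$ to obtain $\int_{\C_+}|D\psi_I|^2 y\,dxdy=\|\psi_I\|_{L^2(\R)}^2$; the general case follows by polarization. Your route is purely Fourier-analytic: Plancherel in $x$, then the explicit evaluation $\int_0^\infty y\,e^{-4\pi|\xi|y}\,dy=(4\pi|\xi|)^{-2}$, which exactly cancels the multiplier $(4\pi|\xi|)^2$ coming from $D$ on each factor. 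The compact Fourier support of $\psi$ away from the origin makes the Fubini step immediate and in fact shortens the argument compared with the Green's-formula approach; the paper's method, by contrast, does not rely on any Fourier-support hypothesis and is the standard way one proves such identities for general $L^2$ functions.
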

\begin{proof}
	Note that $\Delta |\psi_I|^2=|\partial f_I^+|^2+|\bar \partial f_I^-|^2$. By Cauchy's theorem, $\partial f_I^+$ and $\bar \partial f_I^-$ are orthogonal with respect to the inner product
	\[
	(f,g)\mapsto \int_{\C _+}f(x+iy)\overline{g(x+iy)}y\, dxdy.
	\]
	Applying Green's formula,
	\begin{equation*}
	\int_\Omega (v\Delta u-u\Delta v )\, dV=\int_{\partial \Omega}(v\nabla u-u\nabla v )\, d\vec{S},
	\end{equation*}
	with $\Omega=\C _+$, $u=|\psi_I|^2$ and $v=y$ yields 
	\begin{equation*}
	\int_{\C _+}|D\psi_I(x+iy)|^2y\, dxdy=\int_{\R}|\psi_I(x)|^2\, dx.
	\end{equation*}
	Now \eqref{Eq:Littlewood-Paley} follows by polarization, and orthogonality of the system $\{\psi_I\}_{I\in\DC(\R)}$.
\end{proof}

\subsubsection{Proof of Lemma \ref{Lemma:Scaling}}
This step is completely elementary. We use \eqref{Eq:Periodization}, together with the change of variables $w=e^{2\pi i z}$, and Lemma \ref{Lemma:Littlewood-Paley}:
\begin{align*}
\int_{\D  }|g_I|^2\, dA_1 
&=
4\pi ^2\int_{\tilde Q_{\T}} |\sum_{k\in\Z } f_I(z-k)|^2(1-e^{-4\pi y})e^{-2\pi y}\, dxdy 
\\
&\lesssim
\int_{\tilde Q_{\T}}  |\sum_{k\in\Z } f_I(z-k)|^2y\, dxdy
\\
&=
\sum_{k,l\in\Z }\int_{\tilde Q_{\T}}  f_I(z-k)\overline{f_I(z-l)}y\, dxdy
\\
&=
\sum_{l\in\Z }\int_{\C _+}  f_I(z)\overline{f_I(z-l)}y\, dxdy
\\
&=
\int_{\C _+}  |f_I(z)|^2y\, dxdy
=
|I|.
\end{align*}
\qed

\subsubsection{Proof of Lemma \ref{Lemma:DiagonalTerms}}
	Once again, we follow closely \cite{Nazarov-Treil-Volberg1997:CounterExInfDimCarlesonEmbThm}. Let $e=\sum_{l=1}^N b_le_l$ be a unit vector in $\HC$, and $K\in\DC(\T)$ with $\rk(K)=k$. We begin by choosing $j\ge k$, and sum over $\DC(K)\cap\DC_j(\T)$.
	\[
	\sum_{I\in\DC(K)\cap \DC_j(\T)}|\langle \omega_I,e\rangle|^2=\sum_{l_1,l_2=0}^{j-1}a_{j-l_1}a_{j-l_2}\overline{b_{l_1}}b_{l_2}\sum_{I\in\DC(K)\cap \DC_j(\T)}e^{2\pi i(2^{l_1}-2^{l_2})C_I}
	\]
	If $l_1=l_2$, then
	\[
	\sum_{I\in\DC(K)\cap \DC_j(\T)}e^{2\pi i(2^{l_1}-2^{l_2})C_I}=2^{j-k}=\frac{|K|}{|I|}.
	\]
	If $l_1\ne l_2$, then, like in the calculation of \eqref{Eq:SumOverGeneration}, we obtain
	\[
	\sum_{I\in\DC _j(\T)\cap \DC (K)}e^{2\pi i(2^{l_1}-2^{l_2})C_I}
	=\frac{1-e^{2\pi i (2^{l_1}-2^{l_2})2^{-k}}}{1-e^{2\pi i (2^{l_1}-2^{l_2})2^{-j}}}
	\]
	The above right-hand side will be approximated using the elementary estimate
	\[
	|1-e^{2\pi i x}|\approx |x|,\quad|x|\le \frac{1}{2}.
	\]
	By symmetry, it suffices to consider the case $l_1>l_2$. If $l_1,l_2\ge k$, then $1-e^{2\pi i (2^{l_1}-2^{l_2})2^{-k}}=0$, so any such terms vanish. If $j>l_1\ge k>l_2$, then
	\[
	|\frac{1-e^{2\pi i (2^{l_1}-2^{l_2})2^{-k}}}{1-e^{2\pi i (2^{l_1}-2^{l_2})2^{-j}}}|
	\lesssim
	|\frac{2^{l_2-k}}{(2^{l_1}-2^{l_2})2^{-j}}|\lesssim 2^{l_2-l_1}\frac{|K|}{|I|}.
	\]
	If $l_1,l_2 < k$, then
	\[
	|\frac{1-e^{2\pi i (2^{l_1}-2^{l_2})2^{-k}}}{1-e^{2\pi i (2^{l_1}-2^{l_2})2^{-j}}}|
	\lesssim
	\frac{|K|}{|I|}.
	\]
	With these results
	\[
	\sum_{I\in\DC (K)}|I||\langle \omega_I, e\rangle|^2
	= 
	\sum_{j=k}^N\sum_{I\in\DC _j(K)\cap \DC (\T)}|I||\langle \omega_I, e\rangle|^2 
	\lesssim 
	C|K|,
	\]
	where
	\begin{align*}
	C=\sum_{j=k}^N\left(\sum_{l=0}^{j-1}a_{j-l}^2|b_{l}|^2 +
	\sum_{l_1=k}^{j-1}\sum_{l_2=0}^{k-1}a_{j-l_1}a_{j-l_2}|b_{l_1}||b_{l_2}|2^{l_2-l_1}\right.
	\\
	+
	\sum_{\substack{l_1,l_2=0 \\l_1\ne l_2}}^{k-1}a_{j-l_1}a_{j-l_2}|b_{l_1}||b_{l_2}|\Bigg).
	\end{align*}
	We now make good use of Cauchy--Schwarz's inequality, and rearrangement of terms: First
	\[
	\sum_{j=k}^N\sum_{l=0}^ja_{j-l}^2|b_{l}|^2\le \sum_{l=0}^N\sum_{j=l}^Na_{j-l}^2|b_l|^2 \lesssim  \sum_{l=0}^N|b_l|^2 = 1.
	\]
	Second		
	\begin{align*}
	\sum_{l_1=k}^{j-1}\sum_{l_2=0}^{k-1}a_{j-l_1}a_{j-l_2}|b_{l_1}||b_{l_2}|2^{l_2-l_1}
	&=
	\Bigg(\sum_{l_1=k}^{j-1}a_{j-l_1}|b_{l_1}|2^{-l_1}\Bigg)\Bigg(\sum_{l_2=0}^{k-1}a_{j-l_2}|b_{l_2}|2^{l_2}\Bigg)
	\\
	&\le
	\Bigg(\sum_{l_1=k}^{j-1}a_{j-l_1}^24^{-l_1}\Bigg)^{1/2}
	\Bigg(\sum_{l_2=0}^{k-1}a_{j-l_2}^24^{l_2}\Bigg)^{1/2} \\
	&=
	\Bigg(\sum_{l_1=k}^{j-1}a_{j-l_1}^24^{k-l_1}\Bigg)^{1/2}
	\Bigg(\sum_{l_2=0}^{k-1}a_{j-l_2}^24^{l_2-k}\Bigg)^{1/2}.
	\end{align*}
	Note that $\sum_{l_2=0}^{k-1}a_{j-l_2}^24^{l_2-k}\lesssim \frac{1}{\log N}$, while 
	\[
	\sum_{l_1=k}^{j-1}a_{j-l_1}^24^{k-l_1}
	=
	\sum_{l=0}^{j-k-1}a_{j-k-l}^24^{-l}
	\lesssim
	\sup_{0\le l\le j-k-1}2^{-l}a_{j-k-l}^2
	\lesssim
	\frac{1}{(1+j-k)^2\log N}.		
	\]
	Thus
	\begin{align*}
	\sum_{j=k}^N \sum_{l_1=k}^{j-1}\sum_{l_2=0}^{k-1}a_{j-l_1}a_{j-l_2}|b_{l_1}||b_{l_2}|2^{l_2-l_1}\lesssim \frac{1}{\log N}\sum_{j=k}^N\frac{1}{1+j-k}\lesssim 1.
	\end{align*}
	Third
	\[
	\sum_{j=k}^N	\sum_{\substack{l_1,l_2=0 \\l_1\ne l_2}}^{k-1}a_{j-l_1}a_{j-l_2}|b_{l_1}||b_{l_2}|
	\le
	\sum_{j=k}^N\sum_{l=j-k+1}^ja_l^2 
	\le
	\sum_{l=1}^Nla_l^2
	\lesssim 1.
	\]
This completes the proof of Lemma \ref{Lemma:DiagonalTerms}. \qed
	
\subsubsection{Proof of Lemma \ref{Lemma:OffDiagonalTerms}}

We now address the main technical difficulty of this paper. As a preliminary to Lemma \ref{Lemma:OffDiagonalTerms}, we prove the following result on localization of Poisson extensions for certain Schwartz functions:

\begin{lemma}\label{Lemma:Localization}
	Let $\varphi\in\mathcal{S}$ such that $d=\textnormal{dist}(\textnormal{spt}(\hat\varphi)),0)>0$ and let $p$ be a polynomial of degree $n$. Then
	\begin{equation*}
		|p(x)(\varphi\ast P_y)(x)|\lesssim (1+y^n)\frac{e^{-2\pi dy}}{y^{1/2}},\quad x+iy\in\C _+.
	\end{equation*}
\end{lemma}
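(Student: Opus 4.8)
The plan is to work entirely on the Fourier side. Put $h=\varphi\ast P_y$, so that by the identity $\FC P_y^{\C_+}(\xi)=e^{-2\pi|\xi|y}$ recalled in Section~\ref{Sec:Notation} we have $\hat h(\xi)=\hat\varphi(\xi)e^{-2\pi|\xi|y}$, a function supported in $\{|\xi|\ge d\}$. The first point is that the hypothesis $d>0$ forces $\hat h\in\SC$: since $\hat\varphi$ vanishes on $(-d,d)$, so does $\hat h$, so the corner of $\xi\mapsto|\xi|$ at the origin plays no role and $\hat h\in C^\infty(\R)$, while rapid decay of $\hat h$ and of all its derivatives is inherited from $\hat\varphi$. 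Consequently $h\in\SC$ and every Fourier manipulation below is legitimate.

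Next I would use that, under $\FC$, multiplication by $x^k$ is a constant multiple of $\tfrac{d^k}{d\xi^k}$; writing $p(x)=\sum_{k=0}^n a_kx^k$ this gives $\FC(p\cdot h)=p\bigl(\tfrac{i}{2\pi}\tfrac{d}{d\xi}\bigr)\hat h\in L^1(\R)$, whence Fourier inversion yields the pointwise estimate
\[
|p(x)h(x)|\le\Bigl\|p\bigl(\tfrac{i}{2\pi}\tfrac{d}{d\xi}\bigr)\hat h\Bigr\|_{L^1(\R)}\lesssim\sum_{k=0}^n\Bigl\|\tfrac{d^k}{d\xi^k}\hat h\Bigr\|_{L^1(\R)},
\]
with implied constant depending on $p$. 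So it suffices to prove $\bigl\|\tfrac{d^k}{d\xi^k}\hat h\bigr\|_{L^1(\R)}\lesssim(1+y^k)e^{-2\pi dy}y^{-1/2}$ for $0\le k\le n$.

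To get this I would apply the Leibniz rule, $\tfrac{d^k}{d\xi^k}\hat h=\sum_{i=0}^k\binom{k}{i}\hat\varphi^{(i)}\,\tfrac{d^{\,k-i}}{d\xi^{\,k-i}}e^{-2\pi|\xi|y}$, and observe that for $\xi\neq0$ one has $\bigl|\tfrac{d^{\,k-i}}{d\xi^{\,k-i}}e^{-2\pi|\xi|y}\bigr|=(2\pi y)^{k-i}e^{-2\pi|\xi|y}$, so that (using $|\xi|\ge d$ on the support of $\hat\varphi$)
\[
\Bigl|\tfrac{d^k}{d\xi^k}\hat h(\xi)\Bigr|\lesssim\sum_{i=0}^k(2\pi y)^{k-i}|\hat\varphi^{(i)}(\xi)|\,e^{-2\pi|\xi|y}\,\1_{|\xi|\ge d}.
\]
Then Cauchy--Schwarz, together with $\varphi\in\SC$ (so that $\hat\varphi^{(i)}\in L^2(\R)$), gives
\[
\int_{|\xi|\ge d}|\hat\varphi^{(i)}(\xi)|e^{-2\pi|\xi|y}\,d\xi\le\|\hat\varphi^{(i)}\|_{L^2(\R)}\Bigl(\int_{|\xi|\ge d}e^{-4\pi|\xi|y}\,d\xi\Bigr)^{1/2}=\|\hat\varphi^{(i)}\|_{L^2(\R)}\,\frac{e^{-2\pi dy}}{\sqrt{2\pi y}},
\]
and summing over $i$ (bounding $y^{k-i}\le1+y^k$) yields the claimed estimate for $\bigl\|\tfrac{d^k}{d\xi^k}\hat h\bigr\|_{L^1}$; feeding it back and using $1+y^k\le2(1+y^n)$ for $k\le n$ finishes the proof, with implied constant depending on $\varphi$ and $p$.

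The argument is largely routine. The one step deserving care is the passage at the start: the support assumption $d>0$ is exactly what makes $\hat h$ smooth at the origin, legitimising the $k$-fold differentiation and the appeal to Fourier inversion. The other place to be a little careful is to extract $y^{-1/2}$ rather than the weaker $y^{-1}$, which one does by pairing $\hat\varphi^{(i)}\in L^2$ against $e^{-2\pi|\xi|y}\in L^2$ via Cauchy--Schwarz instead of using a cruder $L^\infty$--$L^1$ bound. I do not expect a genuine obstacle beyond bookkeeping.
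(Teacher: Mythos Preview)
Your proof is correct and follows essentially the same approach as the paper: Fourier inversion turns multiplication by $p$ into differentiation of $\hat\varphi(\xi)e^{-2\pi|\xi|y}$, the Leibniz rule produces terms $y^{k-i}\hat\varphi^{(i)}(\xi)e^{-2\pi|\xi|y}$, and Cauchy--Schwarz against the exponential extracts the factor $e^{-2\pi dy}/y^{1/2}$. The only cosmetic difference is that the paper first uses Schwartz decay to bound $|\hat\varphi^{(i)}(\xi)|\lesssim 1/\xi$ before applying Cauchy--Schwarz to $\tfrac{1}{\xi}\cdot e^{-2\pi\xi y}$, whereas you pair $\hat\varphi^{(i)}\in L^2$ directly with $e^{-2\pi|\xi|y}\in L^2$; both yield the same bound.
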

\begin{proof}
	By the Fourier inversion formula, and the Leibniz rule,
	\begin{align*}
		p(x)(f\ast P_y)(x) &= \int\bigg(p\Big(\frac{1}{2\pi i}\frac{d}{d\xi}\Big)\big(\hat f(\xi) e^{-2\pi |\xi|y}\big)\bigg)e^{2\pi i x\xi}\, d\xi \\
		&=
		\int \bigg(\sum_{k,l=0}^n a_{kl}\Big(\frac{y\xi}{|\xi|}\Big)^l\hat f^{(k)}(\xi ) e^{-2\pi |\xi|y}\bigg)e^{2\pi ix\xi}\, d\xi,
	\end{align*}
	for some numbers $(a_{kl})_{k,l=0}^n$. Using the decay of $\hat f$ (and its derivatives) along with Cauchy--Schwarz inequality one obtains
	\begin{align*}
		|p(x)(f\ast P_y)(x)| &\lesssim 
		\sum_{k,l=0}^n|a_{kl}||y|^l\int_{d}^\infty \frac{1}{\xi}e^{-2\pi \xi y}\, d\xi \\
		&\lesssim 
		(1+y^n)\bigg(\int_d^\infty e^{-4\pi \xi y}\, d\xi \bigg)^{1/2} =(1+y^n)\frac{e^{-2\pi dy}}{\sqrt{4\pi y}}.
	\end{align*}
\end{proof}	
A few simple manipulations show that
\begin{equation*}
	f_I(x+iy)=\frac{1}{|I|}(D\psi)\ast P_{y/{|I|}}\Big(\frac{x-C_I}{|I|}\Big).
\end{equation*}
Applying Lemma \ref{Lemma:Localization}, with $f=(D\psi)$, $p(x)=1+x^2$, and $d=\frac{1}{3}$, yields
\begin{equation}\label{Eq:Localization}
	|f_I(x+iy)|\lesssim \frac{1}{|I|^{1/2}}\frac{1+\big(\frac{y}{|I|}\big)^2}{1+\big(\frac{x-C_I}{|I|}\big)^2}\frac{e^{-\frac{2\pi y}{3}}}{y^{1/2}}.
\end{equation}	
As in the proof of Lemma \ref{Lemma:Scaling} we obtain that
\begin{align*}
\int_{Q_K}g_I\overline{g_J}\, dA_1 = 	4\pi ^2\int_{\tilde Q_K} \sum_{k,l\in\Z } f_I(z-k)\overline{f_J(z-l)}(1-e^{-4\pi y})e^{-2\pi y}\, dxdy.
\end{align*}
By Taylor's formula, $(1-e^{-4\pi y})e^{-2\pi y}=4\pi y+R(y)$, where $|R(y)|\lesssim y^2$. Applying the triangle inequality a few times we obtain that
\begin{align}
\sum_{\substack{I,J\in\DC (\T) \\ \neg (I=J\in\DC (K))}} |\int_{Q_K}g_I\overline{g_J }\, dA_1| \lesssim {} & 
\sum_{\substack{I,J\in\DC (\R) \\ \neg (I=J\in\DC (K)) \\ |I|,|J|\le 1}}
|\int_{\tilde Q_K}f_I(z)\overline{f_J(z)}y\, dxdy| \label{Eq:MainTerms}
\\
&+ 
\sum_{\substack{I,J\in\DC (\R) \\ \neg (I=J\in\DC (K)) \\ |I|,|J|\le 1}}
\int_{\tilde Q_K}|f_I(z)\overline{f_J(z)}|y^2\, dxdy \label{Eq:RemainderTerms}
\end{align}
The terms in the sum on the right-hand side of \eqref{Eq:MainTerms} will be referred to as the main terms, and the terms in \eqref{Eq:RemainderTerms} as the remainder terms.

We prove that the main terms are controlled by $|K|$. Once this is done the remainder terms are easily handled. By symmetry we may assume that $|I|\le |J|$. We treat a number of different cases, roughly in the order of difficulty.

\paragraph{Case $(i)$:} $|K|<|I|\le|J|\le 1$. If $|K|=1$, then this case is trivial. If not, then $-\log (1-|K|)\lesssim |K|$. Using that the integrand is bounded
\begin{equation}\label{Eq:y-Estimate(i)}
\int_0^{C|K|} \Big(1+\Big(\frac{y}{|I|}\Big)^2\Big)\Big(1+\Big(\frac{y}{|J|}\Big)^2\Big)e^{-\frac{2\pi y}{3}\left(\frac{1}{|I|}+\frac{1}{|J|}\right)}\, dy\lesssim C|K|.
\end{equation}
By the definition of relative distance
\begin{equation}\label{Eq:x-Estimate(i)}
\int_{x\in K} \frac{1}{\big(1+\big(\frac{x-C_I}{|I|}\big)^2\big)\big(1+\big(\frac{x-C_J}{|J|}\big)^2\big)}\, dx\lesssim \frac{|K|}{(1+\rd(I,K)^2)(1+\rd(J,K)^2)}.
\end{equation}
By \eqref{Eq:Localization}, \eqref{Eq:y-Estimate(i)}, and \eqref{Eq:x-Estimate(i)},
\begin{align*}
&|\int_{\tilde Q_K}f_I(z)\overline{f_J(z)}y\,  dA(z)| 
\\
&\lesssim
\frac{1}{|I|^{1/2}|J|^{1/2}} \int_{\tilde Q_K}\frac{\big(1+\big(\frac{y}{|I|}\big)^2\big)\big(1+\big(\frac{y}{|J|}\big)^2\big)e^{-\frac{2\pi y}{3}\left(\frac{1}{|I|}+\frac{1}{|J|}\right)}}{\big(1+\big(\frac{x-C_I}{|I|}\big)^2\big)\big(1+\big(\frac{x-C_J}{|J|}\big)^2\big)}\, dxdy 
\\
&\lesssim
\frac{|K|^2}{|I|^{1/2}|J|^{1/2}} \frac{1}{(1+\rd(I,K)^2)(1+\rd(J,K)^2)}.
\end{align*}
The lengths $|I|$ and $|J|$ are of the form $2^k|K|$ and $2^l|K|$, $k,l\ge 1$. Summing over all lengths and all relative distances  one obtains
\begin{align*}
&\sum_{\substack{ I,J\in\DC (\R) \\ |K|<|I|,|J|\le 1}}
|\int_{\tilde Q_K}f_I(z)\overline{f_J(z)}y\, dA(z)| 
\\
&\lesssim
\sum_{k,l=1}^\infty\sum_{m,n\in\Z }\frac{|K|}{2^{(k+l)/2}} \frac{1}{(1+m^2)(1+n^2)}\lesssim |K|.
\end{align*}

\paragraph{Case $(ii)$:} $|I|\le|K|<|J|\le 1$. By the change of variables $\frac{|I|+|J|}{|I||J|}y\mapsto y$
\begin{equation}\label{Eq:y-Estimate(ii)}
\int_0^\infty \Big(1+\Big(\frac{y}{|I|}\Big)^2\Big)\Big(1+\Big(\frac{y}{|J|}\Big)^2\Big)e^{-\frac{2\pi y}{3}\left(\frac{1}{|I|}+\frac{1}{|J|}\right)}\, dy\lesssim \frac{|I||J|}{|I|+|J|}
\approx |I|.
\end{equation}If $\rd(I,K)\le 1$, then
\[
\int_{x\in K}\frac{1}{1+\big(\frac{x-C_I}{|I|}\big)^2}\, dx\le \int_{\R}\frac{1}{1+\big(\frac{x}{|I|}\big)^2}\, dx=\pi|I|.
\]
If $\rd(I,K)\ge 2$, then
\[
\int_{x\in K}\frac{1}{1+\big(\frac{x-C_I}{|I|}\big)^2}\, dx\le \int_{x\in K}\frac{|I|^2}{|x-C_I|^2}\, dx\le \frac{|I|^2}{|K|(\rd(I,K)-1)^2}.
\]
In either case
\begin{equation}\label{Eq:x-Estimate(ii)}
\int_{x\in K}\frac{1}{1+\big(\frac{x-C_I}{|I|}\big)^2}\, dx\lesssim \frac{|I|}{1+\rd(I,K)^2}.
\end{equation}
By \eqref{Eq:Localization}, \eqref{Eq:y-Estimate(ii)}, the definition of relative distance, and \eqref{Eq:x-Estimate(ii)}
\begin{align*}
|\int_{\tilde Q_K}f_I(z)\overline{f_J(z)}y\, dxdy|
\lesssim
\frac{|I|^{1/2}}{|J|^{1/2}} \frac{1}{1+\rd(J,K)^2}\frac{|I|}{1+\rd(I,K)^2}.
\end{align*}
Now $|J|=2^l|K|$, for $l\ge 1$, while $I\in \DC_k\big(|K|+m|K|\big)$, $k\ge 0$, $m\in\Z$.

\begin{align*}
&\sum_{\substack{ I,J\in\DC (\R) \\ |I|\le|K|<|J|\le 1}}
|\int_{\tilde Q_K}f_I(z)\overline{f_J(z)}y\, dA(z)| 
\\
&\lesssim 
\sum_{k,l=0}^\infty\sum_{\substack{m,n\in\Z \\ I\in\DC _k(K+m|K|)}} \frac{2^{-k/2}2^{-l/2}}{1+n^2}\frac{|I|}{1+\rd(I,K)^2}
\\
&\lesssim
\sum_{l=0}^\infty\sum_{m,n\in\Z }\sum_{k=0}^\infty 2^k\frac{2^{-k/2}2^{-l/2}}{1+n^2}\frac{2^{-k}|K|}{1+m^2} \lesssim |K|.
\end{align*}
	
\paragraph{Case $(iii)$:} $|I|\le |J|\le |K|$, $\rd(J,K)\ge 2$. By \eqref{Eq:Localization}, \eqref{Eq:y-Estimate(ii)}, the definition of relative distance, and \eqref{Eq:x-Estimate(ii)}
\begin{align*}
	|\int_{\tilde Q_K}f_I(z)\overline{f_J(z)}y\, dA(z)| 
	&\lesssim
	\frac{|I|^{1/2}}{|J|^{1/2}}\frac{1}{1+\frac{|K|^2}{|J|^2}\rd(J,K)^2}\frac{|I|}{1+\rd(I,K)^2}
	\\
	&\le
	\frac{|I|^{3/2}|J|^{3/2}}{|K|^2\rd(J,K)^2(1+\rd(I,K)^2)}.
\end{align*}
Now $J\in \DC_l\big(K+n|K|\big)$, $l\ge 0$, $|n|\ge 2$, while $I\in \DC_k\big(K+m|K|\big)$, $k\ge l$, $m\in\Z$.
\begin{align*}
\sum_{\substack{I,J\in\DC (\R) \\ |I|\le |J|\le |K| \\ \rd(J,K)\ge 2}}
|\int_{\tilde Q_K}f_I(z)\overline{f_J(z)}y\, dA(z)| 
&\lesssim
\sum_{k,l=0}^\infty\sum_{\substack{m\in\Z, |n|\ge 2 \\ I\in\DC _k(K+m|K|) \\ J\in\DC _l(K+n|K|)}} \frac{|I|^{3/2}|J|^{3/2}}{|K|^2n^2(1+m^2)} \\
&=
\sum_{k,l=0}^\infty\sum_{\substack{m\in\Z \\ |n|\ge 2}} 2^k2^l \frac{2^{-3k/2}2^{-3l/2}|K|}{n^2(1+m^2)} \lesssim |K|.
\end{align*}

\paragraph{Case $(iv)$:} $|I|\le |J|\le |K|$, $\rd(J,K)\le 1$, $\rd(I,K)\ge 2$. By \eqref{Eq:Localization}, \eqref{Eq:y-Estimate(ii)}, the definition of relative distance, and \eqref{Eq:x-Estimate(ii)}, with $J$ in place of $I$,
\begin{align*}
|\int_{\tilde Q_K}f_I(z)\overline{f_J(z)}y\, dA(z)| 
\lesssim
\frac{|I|^{1/2}|J|^{1/2}}{1+\frac{|K|^2}{|I|^2}\rd(I,K)^2}\le \frac{|I|^{3/2}|J|^{3/2}}{|K|^2\rd(I,K)^2}.
\end{align*}
Summing over $J\in \DC_l\big(|K|+n|K|\big)$, $l\ge 0$, $|n|\le 1$, and $I\in \DC_k\big(|K|+m|K|\big)$, $k\ge l$, $|m|\ge 2$, 
\begin{align*}
\sum_{\substack{I,J\in\DC (\R) \\ |I|\le |J|\le |K| \\ \rd(J,K)\le 1 \\ \rd(I,K)\ge 2}}
|\int_{\tilde Q_K}f_I(z)\overline{f_J(z)}y\, dA(z)| 
&\lesssim
\sum_{k,l=0}^\infty\sum_{\substack{|m|\ge 2, |n|\le 1 \\ I\in\DC _k(K+m|K|) \\ J\in\DC _l(K+n|K|)}} \frac{|I|^{3/2}|J|^{3/2}}{|K|^2\rd(I,K)^2}
\\
&=
\sum_{k,l=0}^\infty \sum_{\substack{|m|\ge 2 \\ |n|\le 1}} 2^k2^l \frac{2^{-3k/2}2^{-3l/2}|K|} {m^2}\lesssim |K|.
\end{align*}
		
\paragraph{Case $(v)$:} $I\in \DC\big(K+m|K|\big)$, $|m|\le 1$, $J\in \DC\big(K+n|K|\big)$, $|n|=1$. By symmetry, it suffices to handle the case $n=1$. We split this case into subcases.

\subparagraph{Subcase $(v1)$:} $I=J\in \DC\big(K+|K|\big)$. By \eqref{Eq:Localization} and \eqref{Eq:y-Estimate(ii)}
\begin{align*}
|\int_{\tilde Q_K}f_I(z)\overline{f_J(z)}y\, dA(z)|
\lesssim
\int_{-\infty}^{R_K}\frac{1}{\big(1+\big(\frac{x-C_I}{|I|}\big)^2\big)^2}\, dx.
\end{align*}
By computation
\begin{align*}
\int_{-\infty}^{R_K}\frac{1}{\big(1+\big(\frac{x-C_I}{|I|}\big)^2\big)^2}\, dx
&= 
\frac{|I|}{8}\bigg(\arctan \Big(\frac{|I|}{C_I-R_K}\Big)-\frac{|I|(C_I-R_K)}{|I|^2+(C_I-R_K)^2}\bigg)
\\
&\lesssim
\frac{|I|^2}{C_I-R_K}.
\end{align*}
We parametrize $I\in\DC _k\big(K+|K|\big)$ by $C_I=R_K+(m+\frac{1}{2})|I|$, where $0\le m\le 2^k-1$.
\begin{align*}
\sum_{I\in\DC (K+|K|)}\int_{\tilde Q_K}|f_I(z)|^2y\, dA(z) 
&\lesssim 
\sum_{k=0}^\infty \sum_{I\in\DC _k(K+|K|)} \frac{|I|^2}{C_I-R_K} 
\\
&\lesssim 
\sum_{k=0}^\infty \sum_{m=0}^{2^k-1} 2^{-k}|K|\frac{1}{m+\frac{1}{2}} 
\\
&\lesssim 
\sum_{k=0}^\infty \log(2^k)2^{-k}|K| \lesssim |K|.
\end{align*}

\subparagraph{Subcase $(v2)$:} $I\in \DC(J)$, $I\ne J$. By \eqref{Eq:Localization} and \eqref{Eq:y-Estimate(ii)}
\begin{equation}\label{Eq:HybridEstimate}
|\int_{\tilde Q_K}f_I(z)\overline{f_J(z)}y\, dA(z)|
\lesssim
\frac{|I|^{1/2}}{|J|^{1/2}}\int_{-\infty}^{R_K}\frac{1}{\big(1+\big(\frac{x-C_I}{|I|}\big)^2\big)\big(1+\big(\frac{x-C_J}{|J|}\big)^2\big)}\, dx.
\end{equation}
By computation
\begin{equation}\label{Eq:x-Estimate(v2)}
\int_{-\infty}^{R_K}\frac{1}{\big(1+\big(\frac{x-C_I}{|I|}\big)^2\big)\big(1+\big(\frac{x-C_J}{|J|}\big)^2\big)}\, dx=\sum_{k=1}^3s_k,
\end{equation}
where
\begin{align*}
s_1
&=
\frac{|I||J|(|J|^2-|I|^2)\Big(|J|\arctan \big(\frac{|I|}{C_I-R_K}\big)-|I|\arctan \big(\frac{|J|}{C_J-R_K}\big)\Big)}{2(|I|^2+|J|^2)(C_I-C_J)^2+(C_I-C_J)^4+(|I|^2-|J|^2)^2},
\\
s_2
&=
\frac{|I||J|(C_I-C_J)^2\Big(|J|\arctan \big(\frac{|I|}{C_I-R_K}\big)+|I|\arctan \big(\frac{|J|}{C_J-R_K}\big)\Big)}{2(|I|^2+|J|^2)(C_I-C_J)^2+(C_I-C_J)^4+(|I|^2-|J|^2)^2},
\\
s_3
&=
\frac{(C_I-C_J)|I|^2|J|^2\log\big(\frac{|J|^2+(C_J-R_K)^2}{|I|^2+(C_I-R_K)^2}\big)}{2(|I|^2+|J|^2)(C_I-C_J)^2+(C_I-C_J)^4+(|I|^2-|J|^2)^2}.
\end{align*}
Approximate the denominators of $s_1$ through $s_3$ by $|J|^4$, and $C_I-C_J$ appearing in the numerators by $|J|$. Then
\begin{align*}
|\int_{\tilde Q_K}f_I(z)\overline{f_J(z)}y\, dA(z)| 
\lesssim {} &
\frac{|I|^{5/2}}{|J|^{1/2}}\frac{1}{C_I-R_K}
\\
&+
\frac{|I|^{5/2}}{|J|^{1/2}}\frac{1}{C_J-R_K}
\\
& +
\frac{|I|^{5/2}}{|J|^{3/2}}|\log\Big(\frac{|J|^2+(C_J-R_K)^2}{|I|^2+(C_I-R_K)^2}\Big)|.
\end{align*}

We begin by summing the terms $\frac{|I|^{5/2}}{|J|^{1/2}}\frac{1}{C_I-R_K}$. If $J$ is adjacent to $K$, then we parametrize $I\in\DC _k(J)$ by $C_I=R_K+(m+\frac{1}{2})|I|$, where $0\le m\le 2^k-1$. 
\begin{align*}
\sum_{\substack{I\in\DC(J)\\ I\ne J}}\frac{|I|^{5/2}}{|J|^{1/2}}\frac{1}{C_I-R_K}
=
\sum_{k=1}^\infty \sum_{I\in\DC_k(J)}\frac{2^{-5k/2}|J|^2}{C_I-R_K}
\\
=
|J|\sum_{k=1}^\infty 2^{-3k/2}\sum_{m=0}^{2^k-1}\frac{1}{m+\frac{1}{2}}\lesssim |J|\sum_{k=1}^\infty k2^{-3k/2}\lesssim |J|.
\end{align*}

Clearly the sum of $|J|$ for $J$ adjacent to $K$ is controlled by $|K|$. If $J$ is not adjacent to $K$, then $\frac{1}{C_I-R_K}\approx \frac{1}{C_J-R_K}$. We parametrize $J\in \DC_l\big(K+|K|\big)$, with $\dist(K,J)>0$, by $C_J=R_K+(n+\frac{1}{2})|J|$, where $0\le n\le 2^l-1$.
\begin{align*}
\sum_{\substack{J\in\DC(K+|K|)\\ \dist(K,J)>0\\ I\in\DC(J), I\ne J}}\frac{|I|^{5/2}}{|J|^{1/2}}\frac{1}{C_J-R_K}
=
|K|\sum_{l=0}^\infty \sum_{k=1}^\infty \sum_{n=0}^{2^l-1}2^{-5k/2}2^{-l}\frac{1}{n+\frac{1}{2}}\lesssim |K|.
\end{align*}

Summing the terms $\frac{|I|^{5/2}}{|J|^{1/2}}\frac{1}{C_I-R_K}$ is similar. In order to control the logarithmic terms, note that
\begin{align*}
|\log\Big(\frac{|J|^2+(C_J-R_K)^2}{|I|^2+(C_I-R_K)^2}\Big)| \lesssim
\left\{
\begin{array}{rl}
\log\big(\frac{|J|}{|I|}\big) & \text{if $J$ is adjacent to $K$,}\\
\frac{|J|^2}{(C_J-R_K)^2} & \text{if $J$ is not adjacent to $K$.}
\end{array} \right.
\end{align*}
The terms may now be summed as before.

\subparagraph{Subcase $(v3)$:} $|I|\le|J|$, $I\in\DC\big(K+|K|\big)\setminus\DC(J)$. Again we use \eqref{Eq:HybridEstimate} and \eqref{Eq:x-Estimate(v2)}, but we approximate the denominators of $s_1$ through $s_3$ by $(C_I-C_J)^4$ instead of $|J|^4$:
\begin{align}
|\int_{\tilde Q_K}f_I(z)\overline{f_J(z)}y\, dA(z)|
\lesssim {} &
\frac{|I|^{5/2}|J|^{7/2}}{(C_I-C_J)^4}\frac{1}{C_I-R_K}\label{iii'1}
\\
&+
\frac{|I|^{5/2}|J|^{7/2}}{(C_I-C_J)^4}\frac{1}{C_J-R_K}\label{iii'2}
\\
&+
\frac{|I|^{5/2}|J|^{3/2}}{(C_I-C_J)^2}\frac{1}{C_I-R_K}\label{iii'3}
\\
&+
\frac{|I|^{5/2}|J|^{3/2}}{(C_I-C_J)^2}\frac{1}{C_J-R_K}\label{iii'4}
\\
&+
\frac{|I|^{5/2}|J|^{3/2}}{|C_I-C_J|^3}|\log \Big(\frac{|J|^2+(C_J-R_K)^2}{|I|^2+(C_I-R_K)^2}\Big)|\label{iii'5}.
\end{align}

We being with summing the right-hand side of \eqref{iii'1}. Let $J_0=[R_K,R_K+|J|)$. We use the parametrization $J\in\DC_l\big(K+|K|\big)$, $C_J=R_K+(n+\frac{1}{2})|J|$, $0\le n\le 2^l-1$, $I\in\DC_k\big(J_0+m|J|\big)$, $k\ge 0$, $0\le m\le 2^l-1$, $J_0+m|J|\ne J$. 

If $m\ge 1$, then
\[
\frac{|I|^{5/2}|J|^{7/2}}{(C_I-C_J)^4}\frac{1}{C_I-R_K}
\approx\frac{2^{-5k/2}2^{-l}|K|}{m(n-m)^4}. 
\]
If $m=1$, then we use the additional parametrization $C_I=R_K+(p+\frac{1}{2})|I|$, $0\le p\le 2^k-1$, and
\[
\frac{|I|^{5/2}|J|^{7/2}}{(C_I-C_J)^4}\frac{1}{C_I-R_K}
\approx
\frac{2^{-3k/2}2^{-2l}|K|}{pn^4}. 
\]
Summing,
\begin{align*}
&\sum_{\substack{J\in\DC(K+|K|) \\ |I|\le |J| \\I\in\DC(K+|K|)\setminus\DC(J)}}\frac{|I|^{5/2}|J|^{7/2}}{(C_I-C_J)^4}\frac{1}{C_I-R_K}
\\
&\lesssim 
\sum_{k,l=0}^\infty\bigg( \sum_{p=0}^{2^k-1}\sum_{n=1}^{2^l-1}\frac{2^{-3k/2}2^{-2l}|K|}{pn^4} +
\sum_{m=1}^{2^l-1}\sum_{\substack{n=0\\ n\ne m}}^{2^l}\frac{2^{-3k/2}2^{-l}|K|}{m(n-m)^4}\bigg)
\lesssim
|K|.
\end{align*}

Summing the terms in \eqref{iii'2}, \eqref{iii'3} and \eqref{iii'4}, is similar. For the logarithmic terms \eqref{iii'5}, we have that
\[
\frac{|I|^{5/2}|J|^{3/2}}{|C_I-C_J|^3}|\log \Big(\frac{|J|^2+(C_J-R_K)^2}{|I|^2+(C_I-R_K)^2}\Big)|
\approx
\frac{|I|^{5/2}}{|J|^{3/2}}\frac{|\log\big(\frac{|J|^2+(n+\frac{1}{2})^2|J|^2}{|I|^2+(C_I-R_K)^2}\big)|}{|m-n|^3}
\]
If $m=0$, then
\[
|\log\Big(\frac{|J|^2+(n+\frac{1}{2})^2|J|^2}{|I|^2+(C_I-R_K)^2}\Big)|\le \log \Big(\big(1+(2^l-\frac{1}{2})^2\big)\frac{|J|^2}{|I|^2}\Big)\lesssim (1+k)(1+l).
\]
Similarly, if $n=0$, then
\[
|\log\Big(\frac{|J|^2+(n+\frac{1}{2})^2|J|^2}{|I|^2+(C_I-R_K)^2}\Big)|
=
| \log \Big(\frac{\frac{5}{4}|J|^2}{|I|^2+(C_I-R_K)^2}\Big)|\lesssim (1+k)(1+l).
\]

If $m,n\ge 1$, then
\[
1\le \frac{|J|^2+(n+\frac{1}{2})^2|J|^2}{|I|^2+(C_I-R_K)^2}\le \Big(\frac{n+1}{m}\Big)^2,
\]
whenever $m\le n-1$, and 
\[
\Big(\frac{n}{m+1}\Big)^2\le \frac{|J|^2+(n+\frac{1}{2})^2|J|^2}{|I|^2+(C_I-R_K)^2}\le 1,
\]
whenever $m\ge n+1$. It follows that 	
\[
|\log\Big(\frac{|J|^2+(n+\frac{1}{2})^2|J|^2}{|I|^2+(C_I-R_K)^2}\Big)| \lesssim \frac{|n-m|}{\min\{m,n\}}.
\]

We now compute the sum
\begin{align*}
&\sum_{\substack{J\in\DC(K+|K|) \\ |I|\le |J| \\I\in\DC(K+|K|)\setminus\DC(J)}}\frac{|I|^{5/2}|J|^{3/2}}{|C_I-C_J|^3}|\log \Big(\frac{|J|^2+(C_J-R_K)^2}{|I|^2+(C_I-R_K)^2}\Big)|
\\
&\lesssim
|K|\sum_{k,l=0}^\infty 2^{-3k/2-l}\bigg(\sum_{m=1}^{2^l-1}\frac{(1+k)(1+l)}{m^3}+\sum_{\substack{m,n=1\\m\ne n}}^{2^{l}-1}\frac{1}{|n-m|^2\min\{m,n\}}\bigg)
\\
&\lesssim
|K|\sum_{k,l=0}^\infty 2^{-3k/2-l}(1+k)(1+l)\lesssim |K|.
\end{align*}
	
\subparagraph{Subcase $(v4)$:} $J\in\DC\big(K+|K|\big)$, $I\in\DC\big(K+m|K|\big)$, $m\in\{-1,0\}$. This case is similar to $(v3)$, but when $C_I<R_K$, we need to replace \eqref{Eq:x-Estimate(v2)} with	
\[
\int_{-\infty}^{R_K}\frac{1}{\big(1+\big(\frac{x-C_I}{|I|}\big)^2\big)\big(1+\big(\frac{x-C_J}{|J|}\big)^2\big)}\, dx=\sum_{k=1}^4s_k,
\]
with $s_1,s_2,s_3$ as before, and
\[
s_4
=
\pi \frac{|I||J|^2(|J|^2-|I|^2+(C_I-C_J)^2)}{2(|I|^2+|J|^2)(C_I-C_J)^2+(C_I-C_J)^4+(|I|^2-|J|^2)^2},
\]
The terms $s_1,s_2,s_3$ are summed as in subcase $(v3)$. To sum the terms $s_4$, we parametrize $J\in\DC_k\big(K+|K|\big)$, $k\ge 0$, by $C_J=R_K+(n+\frac{1}{2})|J|$, $0\le n\le 2^{l}-1$, and let $I\in \DC_l\big(J_0-p|J|\big)$, $l\ge 0$, $1\le p\le 2^{l+1}$.
\begin{align*}
\sum_{J\in\DC(K+|K|)}\sum_{m=-1}^0\sum_{\substack{I\in\DC(K+m|K|)\\ |I|\le|J|}}s_4
&\lesssim
\sum_{k,l=0}^\infty \sum_{p=1}^{2^{l+1}}\sum_{\substack{J\in\DC_l(K+|K|)\\ I\in \DC_k(J_0-p|J|)}}\frac{|I|^{3/2}|J|^{3/2}}{(C_I-C_J)^2}
\\
&\lesssim
\sum_{k,l=0}^\infty \sum_{p=1}^{2^{l+1}} \sum_{n=0}^{2^l-1}2^k\frac{2^{-3k/2}2^{-3l/2}|K|}{(n+p)^2}\lesssim |K|.
\end{align*}

\paragraph{Case $(vi)$:} $J\in\DC(K)$, $I\in\DC\big(K+m|K|\big)$, $|m|=1$. This case is similar to case $(v)$.

\paragraph{Case $(vii)$:} $I,J\in\DC(K)$, $I\ne J$. By Lemma \ref{Lemma:Littlewood-Paley} we have that
\begin{align}
&\sum_{\substack{I,J\in\DC (K) \\ I\ne J}}|\int_{\tilde Q_K}f_I(z)\overline{f_J(z)}y\, dA(z)| \nonumber
\\
= {} &
\sum_{\substack{I,J\in\DC (K) \\ I\ne J}}|\int_{\tilde Q_K^c}f_I(z)\overline{f_J(z)}y \, dA(z)| \nonumber
\\
\le {} &
\sum_{\substack{I,J\in\DC (K) \\ I\ne J}}|\int_{x\notin K}\int_{y=0}^{\infty}f_I(z)\overline{f_J(z)}y\,  dA(z)| \label{v1}
\\
&+
\sum_{\substack{I,J\in\DC (K) \\ I\ne J}}|\int_{x\in K}\int_{y=|K|}^{\infty}f_I(z)\overline{f_J(z)}y\,  dA(z)|.\label{v2}
\end{align}
The sum \eqref{v1} is rewritten as
\begin{align*}
&\sum_{\substack{I,J\in\DC (K) \\ I\ne J}}|\int_{x\notin K}\int_{y=0}^{\infty}f_I(z)\overline{f_J(z)}y\, dA(z)|
\\
&\le
\sum_{n\ne 0}\sum_{\substack{I,J\in\DC (K) \\ I\ne J}}|\int_{x\in K+n|K|}\int_{y=0}^{\infty}f_I(z)\overline{f_J(z)}y\, dA(z)|
\\
&=
\sum_{n\ne 0}\sum_{\substack{I,J\in\DC (K+n|K|) \\ I\ne J}}|\int_{x\in K}\int_{y=0}^{\infty}f_I(z)\overline{f_J(z)}y\, dA(z)|
\\
&\le
\sum_{n\ne 0}\sum_{m\in\Z }\sum_{\substack{J\in\DC (K+n|K|) \\ I\in\DC (K+m|K|)}}\int_{x\in K}\int_{y=0}^{\infty}|f_I(z)\overline{f_J(z)}|y\, dA(z) \lesssim |K|,
\end{align*}
as follows from the cases $(i)-(vi)$. 

The terms in \eqref{v2} are approximated by
\begin{align*}
&\int_{x\in \R}\int_{y=|K|}^{\infty}\frac{\big(1+\big(\frac{y}{|I|}\big)^2\big)\big(1+\big(\frac{y}{|I|}\big)^2\big)e^{-\frac{2\pi y}{3}\left(\frac{1}{|I|}+\frac{1}{|J|}\right)}}{\big(1+\big(\frac{x-C_I}{|I|}\big)^2\big)\big(1+\big(\frac{x-C_I}{|I|}\big)^2\big)}\, dydx 
\\
&\lesssim
\frac{|I|^{3/2}}{|J|^{1/2}}e^{-\frac{\pi |K|}{3}\left(\frac{1}{|I|}+\frac{1}{|J|}\right)}\int_{y=0}^\infty \Big(1+\Big(\frac{y}{|I|}\Big)^2\Big)\Big(1+\Big(\frac{y}{|I|}\Big)^2\Big)e^{-\frac{\pi y}{3}\left(\frac{1}{|I|}+\frac{1}{|J|}\right)}\, dy 
\\
&\lesssim
\frac{|I|^{3/2}}{|J|^{1/2}}e^{-\frac{\pi |K|}{3}\left(\frac{1}{|I|}+\frac{1}{|J|}\right)}.
\end{align*}
Summing one now gets that
\begin{align*}
\sum_{\substack{I,J\in\DC (K) \\ I\ne J}}&\int_{x\in K}\int_{y=|K|}^{\infty}|f_I(z)\overline{f_J(z)}|y\, dA(z)
\\
&\lesssim
\sum_{k,l=0}^\infty 2^k2^l2^{-3k/2}2^{-l}e^{-\frac{\pi}{3}\left(2^k+2^l\right)}|K|\lesssim |K|.
\end{align*}

We have proved that the right-hand side of \eqref{Eq:MainTerms} is controlled by $|K|$. This leaves us with \eqref{Eq:RemainderTerms}. We've already done most of the computational work, except that we gain an additional factor $y$. In case $(i)$ we get instead
\begin{align*}
\int_{y=0}^{2|K|}\Big(1+\Big(\frac{y}{|I|}\Big)^2\Big)\Big(1+\Big(\frac{y}{|J|}\Big)^2\Big)e^{-\frac{2\pi y}{3}\left(\frac{1}{|I|}+\frac{1}{|J|}\right)}y\, dy \lesssim |K|^2.
\end{align*}
This is an additional factor $|K|$ which does not give a worse estimate. 
In cases $(ii)-(vi)$ we instead gain an extra factor $\frac{|I||J|}{|I|+|J|}\approx |I|\le |K|$, so these cases are also fine. Only in case $(vii)$ we need to do a little bit of work:
\begin{align*}
\sum_{\substack{I,J\in\DC (K) \\ I\ne J}} \int_{\tilde Q_K}|f_I(z)\overline{f_J(z)}|y^2\, dA(z)
&\lesssim 
\sum_{\substack{I,J\in\DC (K)}} \frac{|I|^{5/2}}{|J|^{1/2}}\frac{1}{1+\rd(I,J)^2} 
\\
&\le
\sum_{k,l=0}^\infty \sum_{n\in\Z }2^k2^l2^{-5k/2}2^{-2l}\frac{|K|^2}{1+n^2}
\lesssim
|K|.
\end{align*}
This completes the proof of Lemma \ref{Lemma:OffDiagonalTerms}, and thus of Theorem \ref{Theorem:Main}. \qed

\bibliographystyle{plain}

\bibliography{paper04bib}

\end{document}